\DeclareOldFontCommand{\bf}{\normalfont\bfseries}{\mathbf}
\newcommand{\bfSigma}{\boldsymbol \Sigma}
\newcommand{\bfm}{\boldsymbol \mu}
\newcommand{\R}{\mathbb{R}} 
\newcommand{\iid}{\stackrel{\text{i.i.d.}}{\sim}} 
\newcommand{\E}[2]{\mathbb{E}_{#2}\left[ #1 \right]} 
\newcommand{\var}[2]{\mathrm{Var}_{#1}\left[ #2 \right]} 
\newcommand{\samples}{m} 
\newcommand{\fid}{h} 
\newcommand{\mfis}{\hat{f}_{\fid,\samples}}
\newcommand{\params}{\boldsymbol \theta} 
\newcommand{\obs}{\boldsymbol y} 
\newcommand{\map}{\mathcal{F}} 
\newcommand{\chisq}[2]{\chi^2\left( #1\ ||\ #2 \right)} 
\newcommand{\KL}[2]{\mathrm{KL}\left( #1\ ||\ #2 \right)} 
\newcommand{\lapmean}{{\boldsymbol \mu}^{\mathrm{LAP}}} 
\newcommand{\lapcov}{{\boldsymbol \Sigma}^{\mathrm{LAP}}}
\newcommand{\noise}{\boldsymbol \Gamma} 
\newcommand{\pot}{\Phi} 
\newcommand{\ceil}[1]{\left\lceil #1 \right\rceil}
\newcommand{\zeros}{{\bf 0}}
\newcommand{\indicator}[1]{ {\bf 1} \left\{ #1 \right\}}
\newcommand{\cost}{c} 
\newcommand{\acc}{\delta} 
\newcommand{\hicost}{C}
\newcommand{\pr}{\pi_{\mathrm{pr}}} 
\newcommand{\prmean}{{\boldsymbol \mu}_{\mathrm{pr}}} 
\newcommand{\prcov}{{\boldsymbol \Sigma}_{\mathrm{pr}}}
\newcommand{\orlicz}[1]{\left\| #1 \right\|_{\psi_2}}
\newcommand{\dimx}{d} 
\newcommand{\dimy}{d'} 
\newcommand{\paramdomain}{\Theta} 
\newcommand{\spatialdomain}{\Omega} 
\newcommand{\bfx}{\boldsymbol x} 
\newcommand{\bfy}{\boldsymbol y}
\newcommand{\bfv}{\boldsymbol v}
\newcommand{\bfw}{\boldsymbol w}
\newcommand{\bA}{\boldsymbol A}
\newcommand{\eye}{\boldsymbol I} 
\newcommand{\cbv}{\boldsymbol e} 
\newcommand{\noisevar}{\boldsymbol \eta}
\newenvironment{keywords}
   {\begin{trivlist}\item[]{\bfseries\sffamily Keywords:}\ }
   {\end{trivlist}}
\newtheorem{lemma}{Lemma}
\newtheorem{theorem}{Theorem}
\newtheorem{corollary}{Corollary}
\newtheorem{assumption}{Assumption}
\newtheorem{proposition}{Proposition}
\newtheorem{remark}{Remark}
\title{Context-aware surrogate modeling for balancing approximation and sampling costs in multi-fidelity importance sampling and Bayesian inverse problems}
\author{Terrence Alsup\thanks{Courant Institute of Mathematical Sciences, New York University (\texttt{alsup@cims.nyu.edu}, \texttt{pehersto@cims.nyu.edu})}
\and Benjamin Peherstorfer\footnotemark[1]}
\date{\today}
\begin{document}

\maketitle

\begin{abstract}
Multi-fidelity methods leverage low-cost surrogate models to speed up computations and make occasional recourse to expensive high-fidelity models to establish accuracy guarantees. Because surrogate and high-fidelity models are used together, poor predictions by surrogate models can be compensated with frequent recourse to high-fidelity models. Thus, there is a trade-off between investing computational resources to improve the accuracy of surrogate models versus simply making more frequent recourse to expensive high-fidelity models; however, this trade-off is ignored by traditional modeling methods that construct surrogate models that are meant to replace high-fidelity models rather than being used together with high-fidelity models. This work considers multi-fidelity importance sampling and theoretically and computationally trades off increasing the fidelity of surrogate models for constructing more accurate biasing densities and the numbers of samples that are required from the high-fidelity models to compensate poor biasing densities.  Numerical examples demonstrate that such context-aware surrogate models for multi-fidelity importance sampling have lower fidelity than what typically is set as tolerance in traditional model reduction, leading to runtime speedups of up to one order of magnitude in the presented examples.
\end{abstract}

\begin{keywords}
Multi fidelity, importance sampling, Bayesian inverse problem, model reduction, Monte Carlo
\end{keywords}

\section{Introduction}
\label{section:introduction}
Surrogate models provide low-cost approximations of computationally expensive high-fidelity models and so are widely used to make tractable a variety of outer-loop applications such as control, optimization, and uncertainty quantification \cite{PWG}.  Typical examples of surrogate models are simplified-physics models \cite{NW,MG,CGHW},  data-fit and machine-learning models \cite{FK, RW}, and projection-based reduced models \cite{Antoulas, Rozza, modred_survey, Hesthaven,CQR2}.  Multi-fidelity methods combine surrogate models for speedups and high-fidelity models for accuracy guarantees \cite{PWG,NGX}. Recourse to the high-fidelity model enables compensation for poor surrogate accuracy, in stark contrast to traditional single-fidelity methods that use  surrogate models alone. The opportunity of multi-fidelity methods, which we exploit in the following, is that it is unnecessary that surrogate models achieve
tight accuracy guarantees because high-fidelity models are occasionally evaluated to correct results. Rather, it can be beneficial to use surrogate models with very low accuracy in favor of very cheap training and evaluation costs. Clearly, there is a limit of how low the accuracy of surrogate models can be in favor of costs before surrogate models become useless. Thus, in multi-fidelity approaches, there is a trade-off between increasing the accuracy of surrogate models with expensive
training methods versus making more frequent recourse to the expensive high-fidelity model to compensate less accurate, but cheaper, surrogate models. Surrogate models that exploit this trade-off are called context-aware models \cite{P}. This work derives context-aware surrogate models for multi-fidelity importance sampling (MFIS) estimators \cite{PCMW}, where the surrogate model is used for constructing a Laplace approximation as a biasing density. Our numerical results show that such context-aware surrogate models for MFIS can achieve an error reduction of more than one order of magnitude compared to using a single model alone.

We review related literature. First, there is work on adaptive discretizations for multi-level Monte Carlo methods and stochastic collocation methods \cite{10.1007/978-3-642-21943-6_10,ImplementationandanalysisofanadaptivemultilevelMonteCarloalgorithm,LANG2020109692} that adaptively refine meshes and time steps to obtain a non-uniform hierarchy of surrogate models. Additionally, there is work on continuous multi-level Monte Carlo \cite{doi:10.1137/18M1172259} that adapts the model hierarchy in a non-uniform fashion. In contrast to coarse-grid discretizations, we will consider surrogate models for constructing biasing densities, which incur training (offline) costs that we trade off with surrogate-model fidelity and frequency of recourse to the high-fidelity model. The work \cite{doi:10.1002/nme.4748} learns data-fit surrogate models for solving Bayesian inverse problems, without building on multi-fidelity methods and thus without deriving the trade-off between model accuracy and costs.  Second, the works \cite{P,IonutThesis} explore the trade-off between surrogate-model fidelity and number of times to make recourse to the high-fidelity for multi-fidelity Monte Carlo estimation with control variates, which is in contrast to using importance sampling for variance reduction as in this work. In \cite{DMSP}, the authors consider local, data-fit approximations and balance the decay rate of the bias due to the approximation with the variance of sampling with Markov chain Monte Carlo methods. Third, there is a large body of work on using surrogate models and multi-fidelity methods that build on importance sampling without explicitly exploiting the trade-off given by surrogate-model fidelity and frequency of recourse to the high-fidelity model. The work \cite{LX, LX2} develops a principled strategy to switch between sampling from a surrogate model and from the high-fidelity model to speedup failure and rare event probability estimation. In \cite{chen_accurate_2013}, the authors build on \emph{a posteriori} error estimators to decide if either a surrogate model or the high-fidelity model is evaluated.  The authors of \cite{doi:10.1137/17M1160069,doi:10.1137/19M1257433} develop a multi-fidelity method for importance sampling to efficiently estimate risk-measures such as the conditional value-at-risk.

We build on MFIS introduced in \cite{PCMW}. In particular, we develop bounds of the error of MFIS estimator that depends on the surrogate-model fidelity and then derive a trade-off between surrogate-model fidelity and computational costs. The first key ingredient is that we use a Laplace approximation computed with the surrogate model as biasing density. The quality of Laplace approximations has been studied in \cite{D} in terms of the Kullback-Leibler (KL) divergence and in \cite{SSW} in terms of the Hellinger distance when the noise level approaches zero. Instead, we consider the $\chi^2$ divergence \cite{nonpara} due to its natural interpretation as the variance of the importance weights.  
There is a large body of work on adaptive importance sampling that studies minimizing the $\chi^2$ divergence to derive an optimal biasing density \cite{SGDAIS,RyuBoydAIS,AkyildizMiguez}, but these works do not consider the cost of surrogate models during training.
The second key ingredient is bounding the error of the importance sampling estimator such as introduced in \cite{CD, APSS, SA}. These error bounds take the form of a probability divergence between the target distribution and the biasing distribution, which we will use to separate the error due to sampling from the error due to the quality of the biasing density that corresponds to the surrogate-model fidelity.

This manuscript is structured as follows. In Section~\ref{section:section2} we outline importance sampling in the multi-fidelity setting along with the bound on the mean-squared error (MSE) in terms of the $\chi^2$ divergence as presented in \cite{APSS}.  Section~\ref{section:section3} is the main contribution of this work and derives a bound on the $\chi^2$ divergence from the target to the biasing distribution in terms of the surrogate-model fidelity that leads to the formulation of an optimization problem for finding a trade-off.  In Section~\ref{section:section4}, we apply the results from Section~\ref{section:section3} in the case where the target distribution is a posterior distribution arising from a Bayesian inverse problem.  In Section~\ref{section:section5}, we demonstrate our method on two numerical examples. The proposed MFIS estimators with context-aware surrogate models achieve more than one order of magnitude error reduction compared to traditional importance sampling that uses the high-fidelity model alone with the same costs.

\section{Importance sampling and problem formulation}
\label{section:section2}

Section~\ref{section:section2p1} describes the setup of our problem.  Section~\ref{section:section2p2} is a brief overview of importance sampling and Section~\ref{section:section2p3} overviews how the quality of a biasing density influences importance sampling estimators in terms of the $\chi^2$ divergence.  Section~\ref{section:section2p4} illustrates the multi-fidelity approach to importance sampling and Section~\ref{section:section2p5} formulates the trade-off between fidelity and number of samples that we are interested in.

\subsection{Notation and problem setting}
\label{section:section2p1}

Let $(\paramdomain, \mathcal{B}(\paramdomain), p)$ denote a probability space where $\paramdomain = \R^{\dimx}$ is the domain for parameters $\params$, $\mathcal{B}(\paramdomain)$ is the Borel $\sigma$-algebra of $\paramdomain$, and $p$ is a probability distribution on $\paramdomain$.  Let $p$ be absolutely continuous with respect to the Lebesgue measure on $\R^d$ and refer to both the measure and the density function as $p$.  Furthermore, the density $p$ may only be known up to a normalizing constant $p = \frac{1}{Z} \tilde{p}$, where $\tilde{p} \ge 0$ is the un-normalized density and $Z = \int_{\params} \tilde{p}(\params)\ \mathrm{d}\params$ is the normalizing constant. In the following, we consider situations where the density $p$ and the un-normalized density $\tilde{p}$ are expensive to evaluate. The goal is to compute quantities of interest with respect to the target distribution $p$ which take the form of expectations
\begin{equation}
	\E{f}{p} = \int_{\paramdomain} f(\params) p(\params)\ \mathrm{d}\params,
\label{eq:qoi}
\end{equation}
where $f$ is a bounded measurable test function, i.e., $\|f\|_{L^{\infty}} < \infty$ where $\|f\|_{L^{\infty}} = \text{ess sup}_{\params \in \paramdomain} |f(\params)|$ under the measure $p$.

\subsection{Importance sampling}
\label{section:section2p2}

Let $q$ be another probability distribution on the Borel space\\ $(\paramdomain, \mathcal{B}(\paramdomain))$ that is absolutely continuous with respect to the Lebesgue measure on $\R^d$ and is such that $p$ is absolutely continuous with respect to $q$.  We let $q$ refer to both the probability distribution and the density function with respect to the Lebesgue measure. If sampling directly from $p$ is impossible and the normalizing constant $Z$ is unknown, then self-normalized importance sampling can be used with $q$ as the biasing distribution to estimate the expectation~\eqref{eq:qoi}.  Draw $\samples$ independent and identically distributed samples $\{\params^{(i)}\}_{i=1}^{\samples} \iid q$ from the biasing distribution $q$ and re-weight them with the target distribution $p$ to obtain the self-normalized importance sampling estimator
    \begin{equation}
    \hat{f}_{\samples} = \frac{\sum_{i=1}^{\samples} f(\params^{(i)}) w (\params^{(i)})}{\sum_{i=1}^{\samples} w(\params^{(i)}) }
    \label{eq:selfis}
    \end{equation}
of $\E{f}{p}$, where the importance weights $w(\params^{(i)})$ are given by evaluating the un-normalized likelihood ratio $w(\params) = \frac{\tilde{p}(\params)}{q(\params)}$ at the samples $\params^{(i)}$.  If all $w(\params^{(i)}) = 0$, then we define $\hat{f}_m = 0$.  The estimator \eqref{eq:selfis} is a consistent estimator of $\E{f}{p}$ as the sample size $\samples \to \infty$.

\subsection{Error of the importance sampling estimator}
\label{section:section2p3}

Theorem~2.1 of \cite{APSS} gives the following bound on the MSE of the self-normalized importance sampling estimator \eqref{eq:selfis}:  if $p$ is absolutely continuous with respect to $q$, then
 \begin{equation}
    \E{ \left( \hat{f}_{\samples} -\E{f}{p}\right)^2 }{} \
    \le\     \frac{4 \|f\|_{L^{\infty}}^2 }{\samples}\left(\chisq{p}{q} + 1\right)
    \label{eq:mse_bound}
    \end{equation}
holds, with the $\chi^2$ divergence from $p$ to $q$ defined as
 \begin{equation}
    \chisq{p}{q} + 1 = \var{q}{\frac{p}{q}} + 1 = \int_{\paramdomain} \left( \frac{p(\params)}{q(\params)} \right)^2 q(\params)\ \mathrm{d}\params  = \int_{\paramdomain} \frac{p(\params)}{q(\params)}\ p(\params)\ \mathrm{d}\params .
    \label{eq:chi2defn}
    \end{equation}
Note that the inequality~\eqref{eq:mse_bound} holds trivially if $\E{w^2}{q} = \infty$.  Since $f$ is bounded, it holds  $(\hat{f}_{\samples} - \E{f}{p})^2 \le 4 \|f\|_{L^{\infty}}^2$, which means that the bound \eqref{eq:mse_bound} is only useful if $\samples \ge \chisq{p}{q} + 1$.  The bound \eqref{eq:mse_bound} motivates setting the effective sample size to
    \begin{equation}
    \samples_{\mathrm{eff}} = \frac{\samples}{\chisq{p}{q} + 1},
    \label{eq:MEff}
    \end{equation}
    so that a large $\chi^2$ divergence corresponds to a large variance of the weights, meaning more samples are needed to reduce the MSE of the estimator \eqref{eq:selfis}.  The effective sample size \eqref{eq:MEff} motivates finding a biasing density $q$ that is close to $p$ with respect to the $\chi^2$ divergence.  The $\chi^2$ divergence is related to other probability divergences such as the Kullback-Leibler (KL) divergence
 \[
    	\KL{p}{q} = \int_{\paramdomain} \log\left( \frac{p(\params)}{q(\params)} \right)p(\params)\ \mathrm{d}\params
 \]
and the Hellinger distance
 \[
    	d_H(p,\ q) = \left( \frac{1}{2} \int_{\paramdomain} \left(\sqrt{p(\params)} - \sqrt{q(\params)} \right)^2\ \mathrm{d}\params  \right)^{1/2} .
\]
The relation is a lower bound given by Jensen's inequality
\[
    \mathrm{e}^{2d_H(p,\ q)^2} \le \mathrm{e}^{\KL{p}{q}} \le \chisq{p}{q} + 1\,,
 \]
see \cite{nonpara} for more general information regarding these probability divergences.

\subsection{Finding a biasing density}
\label{section:section2p4}

Let $(p_{\fid})_{h > 0}$ be a sequence of probability measures on $(\Theta, \mathcal{B}(\Theta))$, where the distributions $p_{\fid}$ are approximations to $p$ and the index $\fid > 0$ denotes the fidelity of the approximation. For each $\fid$, let $p_{\fid}$ be absolutely continuous with respect to the Lebesgue measure on $\R^d$ and use $p_{\fid}$ to denote both the density function and the distribution.  Let the density functions converge pointwise so $p_{\fid}(\params) \to p(\params)$ as $\fid \to 0$ for every $\params \in \paramdomain$.  Define $\hicost > 0$ as the cost of evaluating the un-normalized high-fidelity density $\tilde{p}$ and $\cost(\fid) > 0$ as the cost of evaluating the un-normalized surrogate density $\tilde{p}_{\fid}$.  The un-normalized surrogate densities $\tilde{p}_{\fid}$ can be used instead of $\tilde{p}$ to find a biasing density $q_{\fid}$ resulting in the multi-fidelity importance sampling (MFIS) \cite{PCMW} estimator
\begin{equation}
	 \mfis = \frac{\sum_{i=1}^{\samples} f(\params^{(i)}) w_{\fid} (\params^{(i)})}{\sum_{i=1}^{\samples} w_{\fid}(\params^{(i)}) } \quad \text{ where } \quad \{\params^{(i)}\}_{i=1}^{\samples} \iid q_{\fid},
\label{eq:mfis}
\end{equation}
of $\E{f}{p}$ with the importance weights $w_{\fid}(\params^{(i)}) = \tilde{p}(\params^{(i)})/\tilde{q}_{\fid}(\params^{(i)})$ given by the ratio of the un-normalized densities $\tilde{p}$ and $\tilde{q}_{\fid}$ at $\params^{(i)}$.  
Note that the un-normalized surrogate densities $\tilde{p}_h$ are not evaluated in computing the estimator~\eqref{eq:mfis} and are only evaluated when deriving the biasing density $q_h$.
The bound \eqref{eq:mse_bound} shows that the quality of the biasing density with respect to the MSE is determined by the variance of the weights $w_{\fid}(\params^{(i)})$ and thus that the number of samples needed to achieve an error tolerance depends directly on the fidelity $\fid$ of the surrogate density.

\subsection{Problem formulation}
\label{section:section2p5}
Multi-fidelity importance sampling gives rise to the following two-step process of estimating $\E{f}{p}$ for test functions $f$: (i) finding the biasing density $q_{\fid}$ from $p_{\fid}$ and (ii) evaluating the un-normalized densities $\tilde{q}_{\fid}$ and $\tilde{p}$ at $\samples$ samples to obtain an estimate \eqref{eq:mfis} of $\E{f}{p}$. Notice that $q_{\fid}$ is independent of the test function $f$ and thus can be re-used for many different test functions. The first step incurs training costs to derive $q_{\fid}$ using $p_{\fid}$, and the second step incurs online costs of evaluating the un-normalized surrogate and expensive high-fidelity densities. The two steps give rise to a trade-off: investing high training costs to find a good biasing density that keeps the $\chi^2$ divergence low means that fewer evaluations of the expensive high-fidelity density are required in the online step and vice versa. Traditional model reduction \cite{Rozza,modred_survey} typically targets computations where the surrogate model replaces the high-fidelity, where such a trade-off does not exist, instead of combining surrogate and high-fidelity models as in multi-fidelity methods such as MFIS. Thus, traditional model reduction provides little guidance on the mathematical formulation of this trade-off and the total costs.

\section{Context-aware surrogate models for multi-fidelity importance sampling} \label{section:section3}

We consider the following trade-off: given an error tolerance $\epsilon$, what is the optimal fidelity $\fid$ of the surrogate model that minimizes the total computational costs subject to the mean-squared error of the multi-fidelity importance sampling estimator \eqref{eq:mfis} being below or equal to the tolerance $\epsilon$. We refer to such surrogate models as \emph{context-aware} because the fidelity is determined specifically for the online computations of the problem (context) at hand \cite{P}, rather than being prescribed without taking the specific context of multi-fidelity computations into account as in traditional model reduction \cite{Rozza, modred_survey}.

Section~\ref{section:section3p1} revisits the notion of a sub-Gaussian distribution which is used in Section~\ref{section:section3p2} to derive an upper bound for $\chisq{p}{q_{\fid}}$ that depends on the fidelity $\fid$.  Section~\ref{section:section3p3} introduces a Laplace approximation $q_{\fid}$ of the low-fidelity surrogate density $p_{\fid}$ to be used as the biasing density and discusses its properties.  Section~\ref{section:section3p4} uses the bound~\eqref{eq:chi2bound} on the $\chi^2$ divergence to formulate an optimization problem that selects a fidelity $\fid^*$ based on the online stage of MFIS and derives the overall cost complexity of the corresponding estimator. Section~\ref{section:section3p5} summarizes the entire computational procedure in algorithmic form.

\subsection{Sub-Gaussian distributions}
\label{section:section3p1}

For importance sampling without a fixed test function $f$, it is imperative that the importance weights have finite variance (i.e., finite $\chi^2$ divergence) which means that the tails of the biasing density cannot be significantly lighter than the tails of the target density $p$.  Sub-Gaussian distributions are characterized by their fast tail decay.  A useful norm for quantifying the tail decay of a real-valued random variable, $X$, is the Orlicz norm defined as
\[
    \orlicz{X} = \inf \left\{ t > 0\ |\ \E{\exp(X^2/t^2)}{} \le 2 \right\}\,,
\]
see \cite[Sec.~2.5, Sec.~3.4]{HDP} for other equivalent definitions.  For a real random vector $\bfx = (x_1,\ldots,x_{\dimx})$, the Orlicz norm is defined to be
\[
    \orlicz{\bfx} = \sup_{\bfv \in S^{\dimx-1}} \orlicz{\bfv^T \bfx},
\]
where $S^{\dimx - 1} \subset \R^{\dimx}$ is the unit sphere defined as $S^{\dimx - 1} = \{ \bfv \in \R^{\dimx} : \|\bfv\|_2 = 1\}$.  A probability distribution $\pi$ is said to be sub-Gaussian if any random variable $\bfx \sim \pi$ has $\orlicz{\bfx} < \infty$.  Two examples of sub-Gaussian distributions are multivariate Gaussians and distributions with compact support.  If $\bfx \sim N(0,\sigma^2 \eye)$ then $\orlicz{\bfx} \le \sqrt{2}\sigma$.  In the following Lemma~\ref{lemma:subgaussian} we give a characterization of sub-Gaussian distributions that will be used in the following sections.  The lemma is a multi-dimensional version of Proposition 2.5.2 (iv) in \cite{HDP}.  We did not find this specific result in the literature and so we provide a proof in Appendix~\ref{appdx:proof}, even though it is a technical auxiliary result for us only.

        \begin{lemma}
A random vector $\bfx$ with density $\pi$ is sub-Gaussian if and only if there exists a symmetric positive-definite matrix ${\boldsymbol A}$ such that for all vectors $\bfm \in \mathbb{R}^{\dimx}$
    \[
    \E{ \exp\left( (\bfx - \bfm)^T{\boldsymbol A}(\bfx - \bfm) \right)}{ \pi} < \infty\, .
    \]
    \label{lemma:subgaussian}
    \end{lemma}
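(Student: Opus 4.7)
My plan is to establish both implications by scaling arguments anchored on the definition $\orlicz{\bfx} = \sup_{\bfv \in S^{\dimx-1}} \orlicz{\bfv^T \bfx}$, combined with the one-dimensional characterization $\E{\exp(X^2/K^2)}{} \le 2$ of sub-Gaussianity. In both directions the matrix $\bA$ effectively reduces to a (small) multiple of the identity; positive definiteness enters only through the smallest eigenvalue $\lambda_{\min}(\bA) > 0$.

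For the $(\Leftarrow)$ direction I would first specialize the hypothesis to $\bfm = \zeros$, yielding $\E{\exp(\bfx^T \bA \bfx)}{\pi} < \infty$. For any unit vector $\bfv \in S^{\dimx-1}$, the Rayleigh-type bound $(\bfv^T\bfx)^2 \le \|\bfx\|_2^2 \le \lambda_{\min}(\bA)^{-1}\,\bfx^T\bA\bfx$ converts the quadratic-form integrability into a uniform univariate tail bound:
\[
	\E{\exp\!\bigl((\bfv^T\bfx)^2/t^2\bigr)}{\pi} \le \E{\exp\!\bigl((t^2\lambda_{\min}(\bA))^{-1}\,\bfx^T\bA\bfx\bigr)}{\pi}.
\]
The key step is then to observe that $g(s) := \E{\exp(s\,\bfx^T \bA \bfx)}{\pi}$ is continuous on $[0,1]$ by dominated convergence with dominator $\exp(\bfx^T \bA \bfx)$, and satisfies $g(0) = 1$. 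Hence some $s^\star \in (0,1]$ achieves $g(s^\star) \le 2$, and choosing $t$ so that $t^2 \lambda_{\min}(\bA) = 1/s^\star$ yields $\orlicz{\bfv^T \bfx} \le t$ uniformly in $\bfv$, so that $\orlicz{\bfx} \le t < \infty$.

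For the $(\Rightarrow)$ direction I would try $\bA = c\,\eye$ for a sufficiently small $c > 0$ and verify the integrability for every $\bfm$ simultaneously. The elementary inequality $\|\bfx - \bfm\|_2^2 \le 2\|\bfx\|_2^2 + 2\|\bfm\|_2^2$ peels off the translation and reduces the claim to showing $\E{\exp(2c\|\bfx\|_2^2)}{\pi} < \infty$ for some $c > 0$. Writing $\|\bfx\|_2^2 = \sum_{i=1}^{\dimx} x_i^2$ with $x_i = \cbv_i^T \bfx$ and applying Hölder's inequality gives
\[
	\E{\exp\!\Bigl(2c \sum_{i=1}^{\dimx} x_i^2\Bigr)}{\pi} \le \prod_{i=1}^{\dimx} \E{\exp(2c\dimx\, x_i^2)}{\pi}^{1/\dimx}.
\]
Since $\orlicz{x_i} \le \orlicz{\bfx} =: K$ by the definition of the Orlicz norm, it holds $\E{\exp(x_i^2/K^2)}{\pi} \le 2$, so choosing $c \le 1/(2\dimx K^2)$ bounds each factor by $2^{1/\dimx}$ and the product by $2$, completing the argument.

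The main obstacle is purely bookkeeping rather than conceptual: in the $(\Leftarrow)$ direction the scaling $s^\star$ (and thereby $t$) must be selected uniformly in $\bfv$, which is precisely where $\lambda_{\min}(\bA) > 0$ plays its role; in the $(\Rightarrow)$ direction the translation $\bfm$ has to be absorbed by an elementary quadratic inequality, which is what forces $\bA$ to be a scalar multiple of the identity rather than an arbitrary positive-definite matrix. Beyond these two choices, everything collapses to Hölder's inequality, dominated convergence, and the one-dimensional Orlicz characterization.
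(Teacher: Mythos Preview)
Your proposal is correct and follows the same overall architecture as the paper's proof: for $(\Rightarrow)$ take $\bA = c\,\eye$, peel off the translation via $\|\bfx - \bfm\|^2 \le 2\|\bfx\|^2 + 2\|\bfm\|^2$, and reduce to a product of univariate exponential moments; for $(\Leftarrow)$ specialize to $\bfm = \zeros$, use the Rayleigh bound $(\bfv^T\bfx)^2 \le \lambda_{\min}(\bA)^{-1}\,\bfx^T\bA\bfx$, and then control the resulting scalar expectation. The two technical deviations are both slight improvements. In $(\Rightarrow)$ the paper runs an induction on the dimension with iterated Cauchy--Schwarz, halving the exponent at each step and producing a constant that degrades like $2^{-\dimx}$; your single application of the $\dimx$-factor H\"older inequality yields the same conclusion in one line and a constant $c$ of order $1/\dimx$ rather than $2^{-\dimx}$. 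In $(\Leftarrow)$ the paper invokes Jensen's inequality via the concavity of $x \mapsto x^{1/(\lambda_{\min}(\bA)\,t^2)}$ for $t$ large, which gives an explicit bound $\orlicz{\bfx} \le \sqrt{\log C / (\lambda_{\min}(\bA)\log 2)}$; your dominated-convergence argument for the continuity of $g(s) = \E{\exp(s\,\bfx^T\bA\bfx)}{\pi}$ at $s=0$ is equally valid and arguably more transparent, at the cost of losing the explicit constant.
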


\begin{remark}
In the case where $\pi$ is a Gaussian with covariance $\bfSigma$, the matrix $\bA$ must be such that $\frac{1}{2}\bfSigma^{-1} - \bA$ is symmetric positive definite.  This constraint on $\bA$ will translate to a constraint on the biasing density for non-Gaussian target densities as will be made precise in the next section.
\end{remark}

\subsection{Bounding the $\chi^2$ divergence}
\label{section:section3p2}

In this section we derive the dependence of the MSE of the estimator~\eqref{eq:mfis} with respect to $\E{f}{p}$ on the fidelity $\fid$ used to find the biasing density $q_{\fid}$.  We bound $\chisq{p}{q_{\fid}}$ with respect to $\fid$ and we want this bound to factor into a part depending only on the ratio $p/p_{\fid}$ and a part depending only on the ratio $p_{\fid}/q_{\fid}$.  The following example demonstrates that such a decomposition is not straightforward: let
    \[
        p(x) = a\mathrm e^{-ax},\quad p_{\fid}(x) = b\mathrm e^{-bx},\quad q_{\fid}(x) = c\mathrm e^{-cx}\, \qquad x \ge 0 \, ,
    \]
    for $a,b,c > 0$, be three exponential distributions.  Then
    \[
        \chisq{p}{p_{\fid}} = \int_{0}^{\infty} \frac{a^2}{b} \mathrm e^{-(2a - b)x}\mathrm dx
        = \frac{a^2}{b(2a - b)}
    \]
    if $a > b/2$ and $\infty$ otherwise.  By taking $a = 2$, $b = 3/2$ and $c = 1$, we have that
    \[
        \chisq{p}{p_{\fid}} < \infty, \quad
        \chisq{p_{\fid}}{q_{\fid}} < \infty,
        \]
        but that
        \[
        \chisq{p}{q_{\fid}} = \infty\,,
    \]
    which means that we cannot directly decompose the $\chi^2$ divergence into the product of $\chi^2$ divergences with an intermediate distribution.  In contrast, the Cauchy-Schwarz inequality gives
 \begin{equation}
    \chisq{p}{q_{\fid}} + 1 = \left\| \frac{p}{q_{\fid}} \right\|_{L^1(p)} = \left\langle \frac{p}{p_{\fid}},\ \frac{p_{\fid}}{q_{\fid}}  \right \rangle_{L^2(p)} \le \left\| \frac{p}{p_{\fid}} \right\|_{L^2(p)} \left\| \frac{p_{\fid}}{q_{\fid}} \right\|_{L^2(p)}\,,
\label{eq:decomp}
\end{equation}
   which requires the likelihood ratios $p/p_h$ and $p_h/q_h$ to be in $L^2(p)$ as opposed to $L^1(p)$, which is required for the bound \eqref{eq:mse_bound} to hold and be finite.  The next four assumptions are sufficient for the likelihood ratios $p/p_h$ and $p_h/q_h$ to be in $L^2(p)$ and to decompose the $\chi^2$ divergence as in the right-hand side of Equation~\eqref{eq:decomp}.

    \begin{assumption}[Exponential form of the densities]
    The densities $p, p_h$ and $q_h$ have the form
    \[
    p(\params) = \frac{1}{Z}\mathrm{e}^{-\pot(\params)},
    \quad p_{\fid}(\params) = \frac{1}{Z_{\fid}} \mathrm{e}^{-\pot_{\fid}(\params)},
    \quad q_{\fid}(\params) = \frac{1}{\tilde{Z}_h} \mathrm{e}^{-\tilde{\pot}_{\fid}(\params)},
    \]
    with potentials $\pot, \pot_{\fid}, \tilde{\pot}_{\fid} \in \mathcal{C}^2(\Theta)$ that are twice differentiable and have continuous derivatives, normalizing constants $Z,Z_h,\tilde{Z}_h$, and $\pot_{\fid}(\params) \to \pot(\params)$ for all $\params \in \paramdomain$ as $h \to 0$.
    \label{assumption:potential}
    \end{assumption}

    \begin{assumption}[Decay of the target density]
        The target density $p$ is sub-Gaussian with matrix $\bA$; see Lemma~\ref{lemma:subgaussian}.
                \label{assumption:subgaussian}
    \end{assumption}

    \begin{assumption}[Error of the surrogate potentials]
    There exists an error function $\acc(\fid) > 0$ and a function $\tau(\params) \ge 0$, such that
    \[
        \pot_{\fid}(\params) \le \pot(\params)  + \acc(\fid) \tau(\params)
    \]
    for all $\params \in \paramdomain$, where $\delta(\fid) \to 0$ as $\fid \to 0$.      	
    \label{assumption:hi2low}
    \end{assumption}

    \begin{assumption}[Biasing densities]
    There exists a function $\gamma(h) > 0$ and a function $\omega(\params) \ge 0$ such that for all $\fid$
    \[
         \tilde{\pot}_{\fid}(\params) \le \Phi_{\fid}(\params) +  \gamma(\fid) \omega(\params)
    \]
    for all $\params \in \paramdomain$.
    \label{assumption:surrogate2approx}
    \end{assumption}

\begin{remark}
Assumption~\ref{assumption:surrogate2approx} does \emph{not} assume that $\gamma(\fid) \to 0$ as $\fid \to 0$.  Starting with Section~\ref{section:section3p3}, we will choose the density $q_h$ to be a Laplace approximation of $p_h$, which does not necessarily converge to $p_h$ as $h \to 0$. \label{rm:GammaNotZero}
\end{remark}

Theorem~\ref{theorem:chi2bound} gives the decomposition and bound depending on the fidelity $\fid$.
    \begin{theorem}
   Let Assumptions \ref{assumption:potential}, \ref{assumption:subgaussian}, \ref{assumption:hi2low}, and  \ref{assumption:surrogate2approx} hold and assume there exist constants $\tau_0,\omega_0 > 0$ such that
    \[
        \tau(\params) \le \|\params\|^2 + \tau_0, \quad 
        \omega(\params) \le \|\params\|^2 + \omega_0 \, .
    \]
    Let $\fid_{\max}$ be such that for all $\fid \le \fid_{\max}$
    \[
          \gamma(\fid) \le \frac{1}{4} \lambda_{\min}^{\bA} \, ,
    \]
    with $\bA$ being the matrix from Assumption~\ref{assumption:subgaussian} and $\lambda_{\min}^{\bA}$ being its smallest eigenvalue, then for all $\fid$ sufficiently small we have that
    \begin{equation}
        \chi^2(p\ ||\ q_{\fid}) + 1  \le K_0 \mathrm e^{ K_1 \delta(\fid) + K_2 \gamma(\fid)  }
        \label{eq:chi2bound}
    \end{equation}
    where $K_0,K_1,K_2$ are all positive constants independent of $\fid$. 
        \label{theorem:chi2bound}
    \end{theorem}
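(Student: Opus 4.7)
I would start from the Cauchy--Schwarz decomposition~\eqref{eq:decomp},
\[
    \chisq{p}{q_{\fid}} + 1 \;\le\; \|p/p_{\fid}\|_{L^2(p)}\cdot\|p_{\fid}/q_{\fid}\|_{L^2(p)},
\]
and bound each of the two $L^2(p)$ factors separately by writing the densities in the exponential form of Assumption~\ref{assumption:potential} and converting the pointwise potential bounds of Assumptions~\ref{assumption:hi2low} and~\ref{assumption:surrogate2approx} into integral bounds via Lemma~\ref{lemma:subgaussian}. Direct computation gives $\|p/p_{\fid}\|_{L^2(p)}^2 = (Z_{\fid}/Z)^2\,\E{e^{2(\pot_{\fid}-\pot)}}{p}$ and $\|p_{\fid}/q_{\fid}\|_{L^2(p)}^2 = (\tilde Z_{\fid}/Z_{\fid})^2\,\E{e^{2(\tilde{\pot}_{\fid}-\pot_{\fid})}}{p}$, so that $Z_{\fid}$ cancels in the product and only the ratio $\tilde Z_{\fid}/Z$ survives as a prefactor.

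Second, I would apply Assumptions~\ref{assumption:hi2low} and~\ref{assumption:surrogate2approx} together with the quadratic growth bounds on $\tau$ and $\omega$ to split
\[
    \E{e^{2\acc(\fid)\tau}}{p}\le e^{2\acc(\fid)\tau_0}\,\E{e^{2\acc(\fid)\|\params\|^2}}{p},\qquad
    \E{e^{2\gamma(\fid)\omega}}{p}\le e^{2\gamma(\fid)\omega_0}\,\E{e^{2\gamma(\fid)\|\params\|^2}}{p}.
\]
Since $\gamma(\fid)\le\frac{1}{4}\lambda_{\min}^{\bA}$ and $\acc(\fid)\to 0$, for $\fid$ sufficiently small both $2\acc(\fid)$ and $2\gamma(\fid)$ lie below $\lambda_{\min}^{\bA}$. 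Using $\params^T\bA\params\ge\lambda_{\min}^{\bA}\|\params\|^2$, both Gaussian-type integrals are dominated by $\E{e^{\params^T\bA\params}}{p}$, which is finite by Lemma~\ref{lemma:subgaussian} applied with $\bfm=\zeros$. Taking square roots and multiplying yields
\[
    \chisq{p}{q_{\fid}}+1\;\le\;\frac{\tilde Z_{\fid}}{Z}\cdot C\cdot e^{\acc(\fid)\tau_0+\gamma(\fid)\omega_0},
\]
with a constant $C$ depending only on $\bA$ and $p$; this matches the claimed form with $K_1=\tau_0$ and $K_2=\omega_0$ once $\tilde Z_{\fid}/Z$ is absorbed into $K_0$.

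The main obstacle I anticipate is controlling the prefactor $\tilde Z_{\fid}/Z$ uniformly in $\fid$: Assumptions~\ref{assumption:hi2low} and~\ref{assumption:surrogate2approx} only give one-sided pointwise bounds on the potentials and therefore only lower bounds on $Z_{\fid}$ and $\tilde Z_{\fid}$, not the upper bounds that $\tilde Z_{\fid}/Z$ needs. I would close this gap by combining the pointwise convergence $\pot_{\fid}\to\pot$ (Assumption~\ref{assumption:potential}) with dominated convergence to obtain $Z_{\fid}/Z\to 1$ for $\fid$ small, and by appealing to the specific structure that Section~\ref{section:section3p3} will impose on $q_{\fid}$ (a Laplace approximation, whose normalizing constant is explicit in the Hessian of $\pot_{\fid}$ at its mode) to keep $\tilde Z_{\fid}/Z_{\fid}$ bounded as well. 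Everything outside this step is mechanical exponential bookkeeping combined with the single sub-Gaussian moment bound from Lemma~\ref{lemma:subgaussian}.
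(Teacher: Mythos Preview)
Your plan is essentially the paper's proof. The Cauchy--Schwarz split, the exponential-form computation of the two $L^2(p)$ factors, the use of the quadratic growth bounds on $\tau,\omega$, and the domination of $\E{e^{2\delta(h)\|\params\|^2}}{p}$ and $\E{e^{2\gamma(h)\|\params\|^2}}{p}$ by $\E{e^{\params^T\bA\params}}{p}$ via $2\delta(h),2\gamma(h)\le\lambda_{\min}^{\bA}$ all match Parts~1 and~2 of the paper verbatim, right down to the identification $K_1=\tau_0$, $K_2=\omega_0$, $K_0=\E{e^{\params^T\bA\params}}{p}$.

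The only divergence is in handling the prefactor $\tilde Z_h/Z$. The paper does \emph{not} invoke dominated convergence or any Laplace structure; in its Part~3 it writes $\tilde Z_h/Z=\E{\exp(\Phi-\tilde\Phi_h)}{p}$ and claims that Assumptions~\ref{assumption:hi2low} and~\ref{assumption:surrogate2approx} give $\tilde Z_h/Z\le 1$ directly (equation~\eqref{eq:bound4}). Your instinct that the one-sided inequalities point the wrong way here is correct: those assumptions yield $\tilde\Phi_h\le\Phi+\delta\tau+\gamma\omega$, hence $\Phi-\tilde\Phi_h\ge-\delta\tau-\gamma\omega$, which produces a \emph{lower} bound on $\tilde Z_h/Z$, not the upper bound asserted in~\eqref{eq:bound4}. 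So the paper's Part~3 appears to have the inequality reversed, and some additional input---such as what you propose---is indeed needed to control this ratio uniformly in $h$. Note, however, that appealing to the explicit Laplace normalizing constant goes beyond the hypotheses of Theorem~\ref{theorem:chi2bound} as stated, which is formulated for general $q_h$ satisfying Assumption~\ref{assumption:surrogate2approx}.
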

By the assumption in Theorem~\ref{theorem:chi2bound} that $\gamma(h) \le \lambda_{\min}^{\bA}/4$, the bound~\eqref{eq:chi2bound} can be written in the form
\begin{equation}
\chi^2(p\ || \ q_{\fid}) + 1 \leq \tilde{K}_0 \mathrm e^{K_1 \delta(h)}
\label{eq:chi2bound_simplified}
\end{equation}
where the constant $\tilde{K}_0$ now absorbs the dependency on the approximation $q_{\fid}$
\begin{equation}
    \tilde{K}_0 = K_0 \mathrm{e}^{K_2\lambda_{\min}^{\bA}/4} \ge K_0 \mathrm{e}^{K_2\gamma(h)}\, .
    \label{eq:constantK0}
\end{equation}
In the limit as the fidelity $h \to 0$, the upper bound~\eqref{eq:chi2bound_simplified} remains bounded by the constant $\tilde{K}_0$, which is determined entirely by the choice of biasing densities $q_h$.

    \begin{proof}[Proof of Theorem~\ref{theorem:chi2bound}]
   By Assumption~\ref{assumption:subgaussian}, $p$ is sub-Gaussian with matrix $\bA \succ 0$ so that by Lemma~\ref{lemma:subgaussian}
    \[
        \frac{1}{Z} \int_{\paramdomain} \exp \left( \params^T\bA \params  - \pot(\params) \right)\mathrm d\params < \infty \, .
    \]
    Recall that $Z$ is the normalizing constant from Assumption~\ref{assumption:potential}.

    \noindent\emph{Part 1: Bounding high-fidelity to surrogate ratio}

    The first term on the right-hand-side of Equation~\eqref{eq:decomp} can be bounded using Assumption~\ref{assumption:hi2low}:
 \begin{align*}
        \left\| \frac{p}{p_{\fid}} \right\|_{L^2(p)}^2
        &= \frac{1}{Z} \left( \frac{Z_{\fid}}{Z} \right)^2 \int_{\paramdomain} \exp\left\{ 2 \left( \pot_{\fid}(\params) - \pot(\params) \right) - \pot(\params)  \right\} \mathrm d\params\\
        &\le \frac{1}{Z} \left( \frac{Z_{\fid}}{Z} \right)^2 \int_{\paramdomain} \exp \left\{ 2 \delta(\fid) \left( \|\params\|^2 +  \tau_0 \right) - \pot(\params) \right\}\mathrm d\params \, .
 \end{align*}
 Re-writing this last line gives
 \begin{equation}
 	 \left\| \frac{p}{p_{\fid}} \right\|_{L^2(p)}^2 \le
	 \frac{1}{Z} \left( \frac{Z_{\fid}}{Z} \right)^2 \exp\left( 2 \tau_0 \delta(\fid) \right) \int_{\paramdomain} \exp \left\{ 2 \delta(\fid) \|\params\|^2  - \pot(\params) \right\}\mathrm d\params \, .
 \label{eq:part1}
 \end{equation}
Now the two dependencies of the right-hand side of~\eqref{eq:part1} on the fidelity $\fid$ are through the ratio $Z_{\fid}/Z$ and through $\delta(\fid)$. For now we just bound the integral on the right-hand side of~\eqref{eq:part1}, which is finite since $\bA \succ 2\delta(\fid) \eye$ for all $\fid$ sufficiently small.  Adding and subtracting $\params^T \bA \params$ in~\eqref{eq:part1} gives
    \begin{align*}
        \left\| \frac{p}{p_{\fid}} \right\|_{L^2(p)}^2
        &\le \frac{1}{Z} \left( \frac{Z_{\fid}}{Z} \right)^2 \exp\left( 2 \tau_0 \delta(\fid) \right) \int_{\paramdomain} \exp \left\{ 2 \delta(\fid) \|\params\|^2  - \pot(\params) \right\}\mathrm d\params \\
        &= \frac{1}{Z} \left( \frac{Z_{\fid}}{Z} \right)^2 \exp\left( 2 \tau_0 \delta(\fid) \right) \int_{\paramdomain} \exp \left\{ - \params^T \left(\bA - 2 \delta(\fid)\eye   \right)\params + \params^T \bA \params - \pot(\params) \right\}\mathrm d\params \, .
    \end{align*}
Putting this together with the fact that $\bA - 2\delta(\fid)\eye \succ 0$ gives
  \begin{equation}
       \left\| \frac{p}{p_{\fid}} \right\|_{L^2(p)}^2
       \le \frac{1}{Z} \left( \frac{Z_{\fid}}{Z} \right)^2 \exp\left( 2 \tau_0 \delta(\fid) \right) \int_{\paramdomain} \exp \left\{ \params^T \bA \params - \pot(\params) \right\}\mathrm d\params
 \label{eq:bound1}
\end{equation}
to complete the bound of the first term on the right-hand side of Equation~\eqref{eq:decomp}.

    \noindent\emph{Part 2: Bounding surrogate to biasing density ratio}

The second term on the right-hand side of Equation~\eqref{eq:decomp} is bounded in a similar fashion.  By Assumption~\ref{assumption:surrogate2approx} we can bound
    \begin{align*}
        \left\| \frac{p_{\fid}}{q_{\fid}} \right\|_{L^2(p)}^2
        &= \frac{1}{Z} \left( \frac{\tilde{Z}_{\fid}}{Z_{\fid}} \right)^2 \int_{\paramdomain}
        \exp \left\{ 2 \left( \tilde{\pot}_{\fid}(\params) - \pot_{\fid}(\params) \right) - \pot(\params) \right\}\mathrm d\params\\
        &\le \frac{1}{Z} \left( \frac{\tilde{Z}_{\fid}}{Z_{\fid}} \right)^2 \int_{\paramdomain}
        \exp \left\{ 2 \gamma(\fid) \left( \|\params\|^2 + \omega_0 \right)  - \pot(\params) \right\}\mathrm d\params\\
        &= \frac{1}{Z} \left( \frac{\tilde{Z}_{\fid}}{Z_{\fid}} \right)^2 \exp\left( 2 \omega_0 \gamma(\fid) \right) \int_{\paramdomain}  \exp \left\{ 2 \gamma(\fid) \|\params\|^2  - \pot(\params) \right\}\mathrm d\params \, .
      \end{align*}
Again we add and subtract $\params^T \bA \params$ to obtain
  \[
       \left\| \frac{p_{\fid}}{q_{\fid}} \right\|_{L^2(p)}^2   \le \frac{1}{Z} \left( \frac{\tilde{Z}_{\fid}}{Z_{\fid}} \right)^2 \exp\left( 2 \omega_0 \gamma(\fid) \right) \int_{\paramdomain} \exp \left\{ - \params^T \left(\bA - 2 \gamma(\fid)\eye   \right)\params + \params^T \bA \params - \pot(\params) \right\}\mathrm d\params \, .
 \]
 Using this with the fact that $\bA - 2\gamma(\fid)\eye \succeq 0$ for all $\fid \le \fid_{\max}$ gives
 \begin{equation}
          \left\| \frac{p_{\fid}}{q_{\fid}} \right\|_{L^2(p)}^2 \le \frac{1}{Z} \left( \frac{\tilde{Z}_{\fid}}{Z_{\fid}} \right)^2 \exp\left( 2 \omega_0 \gamma(\fid) \right) \int_{\paramdomain} \exp \left\{ \params^T \bA \params - \pot(\params) \right\}\mathrm d\params   \, .
          \label{eq:bound2}
\end{equation}

    Multiplying the right-hand sides of the bounds~\eqref{eq:bound1} and~\eqref{eq:bound2} and then taking the square root gives together with \eqref{eq:decomp} that
\begin{equation}
        \left\| \frac{p}{q_{\fid}} \right\|_{L^1(p)}
        \le \frac{1}{Z}\left( \frac{\tilde{Z}_{\fid}}{Z} \right) \exp \left\{\delta(\fid) \tau_0 + \gamma(\fid) \omega_0 \right\} \int_{\paramdomain} \exp \left\{ \params^T \bA \params - \pot(\params) \right\}\mathrm d\params
        \label{eq:bound3}
\end{equation}
holds. 
    The integral is independent of $\fid$, so it remains to bound the ratio of normalizing constants.

    \noindent\emph{Part 3: Bounding ratio of normalizing constants}

    In general, if $p_{\fid}$ is not in the family of biasing densities then we may have $\tilde{Z}_{\fid} \neq Z_{\fid}$, and thus,
    \[
        \frac{\tilde{Z}_{\fid}}{Z} \not \to 1
    \]
    as $\fid \to 0$.  Instead we just give a constant upper bound on $\tilde{Z}_{\fid}$ that is independent of the fidelity $\fid$.  By Assumption~\ref{assumption:potential}, the normalizing constant $\tilde{Z}_{\fid}$ satisfies
    \begin{align*}
        \tilde{Z}_{\fid} &= \int_{\paramdomain} \exp\left\{-\tilde{\pot}_{\fid}(\params)\right\}\mathrm d\params\\
        &= \int_{\paramdomain}
        \exp \left\{-\tilde{\pot}_{\fid}(\params) + \pot_{\fid}(\params) - \pot_{\fid}(\params) + \pot(\params) - \pot(\params) \right\}\mathrm d\params\\
        &= Z \int_{\paramdomain} \exp\left\{-\tilde{\pot}_{\fid}(\params) + \pot_{\fid}(\params) - \pot_{\fid}(\params) + \pot(\params) \right\}\ p(\params)\mathrm d\params \, .
    \end{align*}
    Dividing by $Z$ and using Assumptions~\ref{assumption:hi2low} and \ref{assumption:surrogate2approx} we have
\begin{equation}
    \frac{\tilde{Z}_{\fid}}{Z} \le \int_{\paramdomain} \exp \left\{ - \delta(\fid)(\|\params\|^2 + \tau_0) - \gamma(\fid)(\|\params\|^2 + \omega_0) \right\} p(\params)\ d\params
    \le 1 \, ,
\label{eq:bound4}
\end{equation}
because the term inside the exponential is less than or equal to 0 and $p$ is a density.    Finally, combining the bounds~\eqref{eq:bound1},~\eqref{eq:bound2}, and~\eqref{eq:bound4} gives the result
    \[
    \chisq{p}{q_{\fid}} + 1
    = \left\| \frac{p}{q_{\fid}} \right\|_{L^1(p)}
    \le  \exp \left\{\delta(\fid) \tau_0 + \gamma(\fid) \omega_0 \right\} \E{\exp\left( \params^T \bA \params \right) }{p} ,
    \]
    where the expectation is independent of $\fid$.  Here
    \[
    K_0 =  \E{\exp\left( \params^T \bA \params \right) }{p}, \quad K_1 = \tau_0, \quad K_2 = \omega_0
    \]
    are all independent of the fidelity $\fid$.
    \end{proof}

        \begin{remark}
        The assumption that $\tau(\params) \le \|\params\|^2 + \tau_0$ holds is similar to the pointwise Assumption 4.8 in Theorem 4.6 of \cite{S}.  In \cite{S}, the pointwise bound can grow faster with respect to $\params$ than in our case because there the Hellinger distance, which is upper-bounded by the $\chi^2$ divergence, is considered.
        \label{remark:pointwise}
        \end{remark}

\subsection{Laplace approximation}
\label{section:section3p3}

In the following, we use a Laplace approximation of a surrogate density $p_{\fid}$ as a specific choice of biasing density $q_{\fid}$.  A Laplace approximation $q_{\fid}$ is a Gaussian approximation to the density $p_{\fid}$ whose mean is a mode of $p_{\fid}$
\begin{equation}
    \lapmean_{\fid} = \underset{\params \in \paramdomain}{\mathrm{argmin}}\ -\log \tilde{p}_{\fid} (\params)
        = \underset{\params \in \paramdomain}{\mathrm{argmin}}\ \pot_{\fid}(\params) ,
\label{eq:laplacemean}
\end{equation}
and whose covariance is the negative inverse Hessian of the log-likelihood evaluated at the mode
\begin{equation}
    \lapcov_{\fid} = -\left[ \nabla \nabla^T \log \tilde{p}_{\fid}\left( \lapmean_{\fid} \right) \right]^{-1}
        = \left[ \nabla \nabla^T \pot_{\fid} \left( \lapmean_{\fid} \right) \right]^{-1} \, .
\label{eq:laplacecovariance}
\end{equation}

A Laplace approximation may not exist for certain distributions where the covariance matrix $\lapcov_h$ or Hessian at the mode is not full-rank.
If the following proposition applies, then a Laplace approximation exists and is a suitable biasing distribution; we refer to \cite{SSW} for in-depth discussions about Laplace approximations as biasing distributions if the covariance matrix is singular.

\begin{proposition}
Let Assumption~\ref{assumption:potential} hold and assume there exists a $\sigma_{\min}^2 > 0$, independent of $\fid$, such that 
\begin{equation}
    \params^T \lapcov_{\fid} \params \ge \sigma_{\min}^2 \|\params\|^2 \, ,
    \label{eq:collapse}
\end{equation}
for all $\params \in \paramdomain$.  Further, assume there exist constants $V \in \R$ and $v > 0$ such that
    \begin{equation}
        \pot_{\fid}(\params) \ge V - v \|\params\|^2
        \label{eq:laplace_pointwise}
    \end{equation}
    for all $\fid$.  Finally, let $B_R = \{\params : \|\params\| \le R\}$ be the ball of radius $R$ centered at $0$, and assume that for all $D > 0$, there exists an $R(D) > 0$ such that for all $\params \notin B_{R(D)}$ and all $h > 0$
        \begin{equation}
            \Phi_h(\params) \ge D \, .
            \label{eq:uniformgrowth}
        \end{equation}
    Then, the Laplace approximation satisfies Assumption \ref{assumption:surrogate2approx} for all $h$ sufficiently small.
    \label{prop:laplace}
    \end{proposition}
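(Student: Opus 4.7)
The plan is to exhibit a positive $\gamma(\fid)$ and a nonnegative $\omega(\params)$ of at most quadratic growth such that the quadratic potential of the Laplace approximation is dominated by $\pot_{\fid}(\params) + \gamma(\fid)\omega(\params)$; by Remark~\ref{rm:GammaNotZero} we are allowed to take $\gamma(\fid)$ constant in $\fid$. Writing the Gaussian density with mean $\lapmean_{\fid}$ and covariance $\lapcov_{\fid}$ in the form $q_{\fid}=\mathrm{e}^{-\tilde{\pot}_{\fid}}/\tilde{Z}_{\fid}$ absorbs the Gaussian normalization into $\tilde{Z}_{\fid}$ and leaves the potential
\[
\tilde{\pot}_{\fid}(\params) = \tfrac{1}{2}(\params-\lapmean_{\fid})^{T}\lapcov_{\fid}^{-1}(\params-\lapmean_{\fid}).
\]
Hypothesis~\eqref{eq:collapse} is equivalent to $\lapcov_{\fid} \succeq \sigma_{\min}^{2}\eye$, hence $\lapcov_{\fid}^{-1}\preceq \sigma_{\min}^{-2}\eye$; combining this with $\|\params-\lapmean_{\fid}\|^{2} \le 2\|\params\|^{2}+2\|\lapmean_{\fid}\|^{2}$ yields
\[
\tilde{\pot}_{\fid}(\params) \le \tfrac{1}{\sigma_{\min}^{2}}\bigl(\|\params\|^{2}+\|\lapmean_{\fid}\|^{2}\bigr).
\]
Subtracting the pointwise lower bound~\eqref{eq:laplace_pointwise} on $\pot_{\fid}$ produces
\[
\tilde{\pot}_{\fid}(\params) - \pot_{\fid}(\params) \le \left(\tfrac{1}{\sigma_{\min}^{2}}+v\right)\|\params\|^{2} + \tfrac{1}{\sigma_{\min}^{2}}\|\lapmean_{\fid}\|^{2} - V,
\]
which is already of the required quadratic-plus-constant form modulo the $\fid$-dependence hidden in $\|\lapmean_{\fid}\|$.

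The main obstacle is to show that $\|\lapmean_{\fid}\|$ stays bounded uniformly in $\fid$; everything else is bookkeeping. I would anchor at $\params=\zeros$: the pointwise convergence from Assumption~\ref{assumption:potential} gives $\pot_{\fid}(\zeros)\to\pot(\zeros)$, so $\pot_{\fid}(\zeros) \le M$ for some constant $M$ and all $\fid$ sufficiently small. Because $\lapmean_{\fid}$ minimizes $\pot_{\fid}$ by~\eqref{eq:laplacemean}, it follows that $\pot_{\fid}(\lapmean_{\fid}) \le \pot_{\fid}(\zeros) \le M$. Applying the uniform growth hypothesis~\eqref{eq:uniformgrowth} with $D = M+1$ then produces an $\fid$-independent radius $R = R(M+1)$ outside of which $\pot_{\fid} \ge M+1 > M$, which forces $\|\lapmean_{\fid}\| \le R$ uniformly in $\fid$.

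Feeding the uniform bound $\|\lapmean_{\fid}\|^2 \le R^2$ into the displayed inequality above yields $\tilde{\pot}_{\fid}(\params) - \pot_{\fid}(\params) \le C_{1}\|\params\|^{2} + C_{2}$ with $\fid$-independent constants $C_{1} = \sigma_{\min}^{-2}+v$ and $C_{2} = R^{2}/\sigma_{\min}^{2}-V$. To close the proof I would fix any $\omega_{0}>0$ and set
\[
\omega(\params) = \|\params\|^{2}+\omega_{0}, \qquad \gamma(\fid) \equiv \max\!\left(C_{1},\,\tfrac{C_{2}}{\omega_{0}}\right) + 1 > 0,
\]
which gives $\tilde{\pot}_{\fid}(\params) \le \pot_{\fid}(\params) + \gamma(\fid)\omega(\params)$ for every $\params \in \paramdomain$ and every sufficiently small $\fid$, verifying Assumption~\ref{assumption:surrogate2approx} with an $\omega$ of precisely the quadratic-plus-constant shape later required in Theorem~\ref{theorem:chi2bound}.
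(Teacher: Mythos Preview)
Your proof is correct and follows essentially the same route as the paper: bound $\tilde{\pot}_{\fid}$ via the eigenvalue estimate~\eqref{eq:collapse}, subtract the lower bound~\eqref{eq:laplace_pointwise} on $\pot_{\fid}$, and use~\eqref{eq:uniformgrowth} together with $\pot_{\fid}(\zeros)\to\pot(\zeros)$ to trap $\lapmean_{\fid}$ in a fixed ball. The only cosmetic difference is that the paper keeps the constant $\pot_{\fid}(\lapmean_{\fid})$ in $\tilde{\pot}_{\fid}$ (as the zeroth-order Taylor term) and bounds it separately, whereas you absorb it into $\tilde{Z}_{\fid}$; both are legitimate representations of the same Gaussian density.
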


    \begin{proof}
    By Assumption~\ref{assumption:potential}, a Laplace approximation 
    \[
    \tilde{\pot}_{\fid}(\params) = \pot_{\fid}\left( \lapmean_{\fid} \right)
        + \frac{1}{2}\left(\params - \lapmean_{\fid}\right)^T \left[\nabla \nabla^T \Phi_{\fid}(\lapmean_{\fid}) \right]^{-1} \left( \params - \lapmean_{\fid} \right)
    \]
    is the second-order Taylor expansion of $\pot_{\fid}$ around one of the modes $\lapmean_{\fid}$.
    The first derivative is zero since it is expanded around a minimizer.  Therefore,
    \[
        \tilde{\pot}_{\fid}(\params) - \pot_{\fid}(\params) = -R_{\fid}(\params) \, ,
    \]
    where $R_{\fid}(\params)$ is the remainder of higher order terms from the Taylor expansion. 
The bound~\eqref{eq:collapse} implies that
\[
    \params^T \left( \lapcov_h \right)^{-1} \params \le \frac{1}{\sigma_{\min}^2} \| \params\|^2 \, ,
\]
and when combined with the bound~\eqref{eq:laplace_pointwise} gives
    \begin{align*}
        \tilde{\pot}_{\fid}(\params) - \pot_{\fid}(\params)
        & \le \tilde{\pot}_{\fid}(\params) - V + v \|\params\|^2\\
        &\le \pot_{\fid}\left( \lapmean_{\fid} \right)
        + \frac{1}{2\sigma_{\min}^2}\| \params - \lapmean_{\fid}\|^2 - V + v \|\params\|^2 \, .
        \end{align*}
        Combining this with the fact that $\|\bfx - \bfy\|^2 \le 2\|\bfx\|^2 + 2\|\bfy\|^2$ yields
\[
       \tilde{\pot}_{\fid}(\params) - \pot_{\fid}(\params)
        \le \pot_{\fid}\left( \lapmean_{\fid} \right) + \left( \frac{1}{\sigma_{\min}^2} + v \right)\|\params\|^2 + \frac{1}{\sigma_{\min}^2} \|\lapmean_{\fid}\|^2 - V \, .
\]

Now we claim that the terms $\Phi_h(\lapmean_h)$ and $\|\lapmean_h\|^2$ can be bounded independent of $h$.  Let $D = \Phi(0) + 1$ and consider that, by assumption, there exists a ball $B_{R(D)}$ such that 
\[
    \Phi_h(\params) \ge  \Phi(0)+1\, , \quad \forall \params \notin B_{R(D)} \, . 
\]
By Assumption~\ref{assumption:potential}, we know that $\Phi_h(0) \to \Phi(0)$ and so that for all $h$ sufficiently small, there exist points $\params_h'$, such that $\Phi_h(\params_h') \le \Phi(0)+1$.  Hence, the minimizers $\lapmean_h \in B_R$ for all $h$ sufficiently small.  Thus, there are constants $B_1,B_2 > 0$ independent of $h$ such that $\pot_{\fid}\left( \lapmean_{\fid} \right) \le B_1$ and $\|\lapmean_{\fid}\|^2 \le B_2$.  Thus, by setting
\[
    \gamma(h) = \frac{1}{\sigma_{\min}^2} + v,\quad \omega(\params) = \|\params\|^2 + \omega_0, \quad \omega_0 = \frac{B_1 + B_2/\sigma_{\min}^2 - V}{\sigma_{\min}^{-2} + v}\, 
\]
Assumption~\ref{assumption:surrogate2approx} holds.
\end{proof}

If Proposition~\ref{prop:laplace} applies, then it is guaranteed that there exists a Laplace approximation and that its covariance matrix remains non-singular as the fidelity $h$ is reduced: Condition~\eqref{eq:collapse} ensures that the covariance matrix $\lapcov_h$ is positive definite and hence that a Laplace approximation $q_h$ of $p_h$ exists for all $h > 0$.  The requirement that $\sigma_{\min}^2$ is independent of $h$ prevents the sequence of covariance matrices from approaching a singular matrix in the limit $h \to 0$. Condition~\eqref{eq:laplace_pointwise} is related to Assumption 2.6(i) from \cite{S}.  A pointwise bound is used to satisfy Assumption~\ref{assumption:surrogate2approx} and ensure the integrability from Theorem~\ref{theorem:chi2bound}. Condition~\eqref{eq:uniformgrowth} implies that $\Phi_h(\params) \to \infty$ as $\|\params\| \to \infty$ uniformly in $h$, and so we know that a global minimizer exists for each potential $\Phi_h$; however, it is not necessarily unique.  In the scenario where multiple global minima exist, we may choose any $\lapmean_h$ from the set of global minimizers.  In particular, we allow for multi-modal target and surrogate densities $p$ and $p_h$. 

\begin{remark}
If Proposition~\ref{prop:laplace} holds, then the Laplace approximation serves as a suitable biasing density for importance sampling in the sense that Assumption~\ref{assumption:surrogate2approx} holds, which is needed for Theorem~\ref{theorem:chi2bound}.  As the fidelity $h \to 0$ we may not have $\gamma(h) \to 0$ and so $\chi^2(p\ ||\ q_h)$ may not go to zero.
\end{remark}

\subsection{Trading off fidelity and costs of surrogate model for MFIS}
\label{section:section3p4}
We now consider the trade-off between selecting a fidelity $h$ to construct a Laplace approximation and the number of samples $m$ in the MFIS estimator \eqref{eq:mfis}. 

\subsubsection{Offline and online costs of MFIS with Laplace approximation as biasing density}
The total computational costs of estimating $\E{f}{p}$ with the MFIS estimator $\hat{f}_{h, m}$ defined in Equation~\eqref{eq:mfis} can be decomposed into training (offline) costs to fit the biasing density $q_{\fid}$ and the online costs to sample and re-weight; cf.~Section~\ref{section:section2p5}. 

In the training phase, the biasing density is constructed. In the following, we consider a Laplace approximation $q_h$ of the surrogate density $p_h$ as the biasing density. The Laplace approximation is constructed from $M$ evaluations of the un-normalized surrogate density $\tilde{p}_h$ and so the training costs are $Mc(h)$ in our case. Recall that $c(h)$ is the cost of evaluating the un-normalized surrogate density $\tilde{p}_h$. For example, in Section~\ref{section:section5}, $M$ will be the total number of surrogate-density evaluations used in Newton's method until machine precision is reached, where both the gradient and Hessian are computed using finite differences as well as computing the Hessian at the mode. 

In the online phase, the weights of the MFIS estimator are obtained by evaluating the target density and the biasing density at $m$ samples. We model the online costs as $m\hicost$, where $\hicost$ denotes the cost of a single evaluation of the un-normalized target density $\tilde{p}$. No evaluations of the surrogate density are necessary in the online phase because only the biasing density (Laplace approximation in our case) is evaluated, which has costs that typically are independent of $h$ and negligible compared to evaluating the target density $\tilde{p}$. However, notice that the online costs depend implicitly on the fidelity $h$ because the number of samples $m$ to reach an MSE below a threshold depends on the quality of the biasing distribution in the sense of the divergence $\chi^2(p || q_h)$; cf.~Section~\ref{section:section2p3}. 

We obtain as the total costs of the MFIS estimator 
\begin{equation}
    \mathrm{cost}(\hat{f}_{h,m}) = m\hicost + Mc(h)\,,
    \label{eq:mfiscost}
\end{equation}
which depends on the number of samples $m$ and on the fidelity $h$ of the surrogate.

\subsubsection{Cost complexity bounds of MFIS}
The following theorem provides cost-complexity bounds for the MFIS estimator under assumptions of the surrogate-models cost and error. We define the context-aware MFIS estimator to be the estimator~\eqref{eq:mfis} with fidelity $h^*$ and sample size $m^*$ given by the following theorem.

\begin{theorem}
Suppose that Theorem~\ref{theorem:chi2bound} and Proposition~\ref{prop:laplace} apply.  Consider a tolerance $0 < \epsilon \le 1$ and set $K_0' = 4\|f\|_{L^{\infty}}^2 \tilde{K}_0  + 1$, where $\tilde{K}_0$ is the constant in Equation~\eqref{eq:constantK0}.  If the surrogate density evaluation costs grow as $\cost(\fid) = \beta^{1/\fid}$ with the fidelity $h$ and the surrogate error decays as $\acc(\fid) = \alpha^{-1/\fid}$ in Assumption~\ref{assumption:hi2low}, with $\alpha,\beta > 1$, then there exist $h^* \in \mathbb{R}$ and $m^* \in \mathbb{N}$ such that the MFIS estimator $\hat{f}_{h^*,m^*}$ achieves an MSE less than the tolerance $\epsilon$ and the costs are bounded as
\[
     \mathrm{cost}(\hat{f}_{h^*,m^*}) \le  \overline{\mathrm{cost}}(\hat{f}_{h^*,m^*}) = \frac{\hicost K_0'}{\epsilon} \mathrm{e}^{ K_1 \epsilon^{1/(1 + \log_{\alpha}\beta)} } + M  \epsilon^{-1/(1 + \log_{\beta}\alpha)} \, .
\]
If instead $\cost(\fid) = \fid^{-\beta}$ and $\acc(\fid) = \fid^{\alpha}$, then the costs are bounded as
    \[
      \mathrm{cost}(\hat{f}_{h^*,m^*}) \le  \overline{\mathrm{cost}}(\hat{f}_{h^*,m^*}) =  \frac{\hicost K_0'}{\epsilon} \mathrm{e}^{K_1 \epsilon^{\alpha/(\alpha + \beta)} } + M \epsilon^{-\beta/(\alpha + \beta)} \, .
    \]
\label{thm:MainTheorem}
\end{theorem}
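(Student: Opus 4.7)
\medskip

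\noindent\textbf{Proof plan for Theorem~\ref{thm:MainTheorem}.}
The plan is to combine the MSE bound~\eqref{eq:mse_bound} with the simplified $\chi^2$ bound~\eqref{eq:chi2bound_simplified} to choose the minimum sample size $m^{*}(h)$ that certifies MSE $\leq \epsilon$ for a given fidelity $h$, plug this into the cost model~\eqref{eq:mfiscost}, and then select $h^{*}$ so that the two resulting terms have the prescribed dependence on $\epsilon$. First I would observe that by~\eqref{eq:mse_bound} and~\eqref{eq:chi2bound_simplified} one has, for $h$ sufficiently small,
\[
\E{(\hat{f}_{h,m}-\E{f}{p})^{2}}{} \;\leq\; \frac{4\|f\|_{L^{\infty}}^{2}\tilde{K}_{0}}{m}\,\mathrm{e}^{K_{1}\delta(h)}.
\]
Setting $m^{*}(h)=\bigl\lceil 4\|f\|_{L^{\infty}}^{2}\tilde{K}_{0}\mathrm{e}^{K_{1}\delta(h)}/\epsilon\bigr\rceil$ guarantees MSE $\leq \epsilon$, and since $\epsilon\leq 1$ and $\mathrm{e}^{K_{1}\delta(h)}\geq 1$, the rounding term can be absorbed into the $+1$ in the definition $K_{0}'=4\|f\|_{L^{\infty}}^{2}\tilde{K}_{0}+1$, giving $m^{*}(h)\leq K_{0}'\mathrm{e}^{K_{1}\delta(h)}/\epsilon$. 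Substituting into~\eqref{eq:mfiscost} yields the master bound
\[
\mathrm{cost}(\hat{f}_{h,m^{*}(h)}) \;\leq\; \frac{\hicost K_{0}'}{\epsilon}\,\mathrm{e}^{K_{1}\delta(h)} + M\,c(h).
\]

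\medskip

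Next I would select $h^{*}$. In the exponential-cost case, writing $y=\delta(h)=\alpha^{-1/h}$ one has $c(h)=\beta^{1/h}=y^{-\log_{\alpha}\!\beta}$; in the algebraic case, $y=\delta(h)=h^{\alpha}$ gives $c(h)=h^{-\beta}=y^{-\beta/\alpha}$. In both cases the bound takes the form $A\mathrm{e}^{K_{1}y}+B y^{-r}$ with $A=\hicost K_{0}'/\epsilon$, $B=M$, and $r=\log_{\alpha}\beta$ or $r=\beta/\alpha$. Rather than optimizing exactly (which would require inverting a transcendental equation), I choose $h^{*}$ so that $y^{*}:=\delta(h^{*})=\epsilon^{1/(1+r)}$; this is well-defined for small enough $\epsilon$ and automatically satisfies $h^{*}\leq h_{\max}$ for small $\epsilon$. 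A short computation using $r/(r+1)=1/(1+1/r)$ and the change of base $1/\log_{\alpha}\beta=\log_{\beta}\alpha$ then gives $c(h^{*})=\epsilon^{-r/(1+r)}$, which reads $\epsilon^{-1/(1+\log_{\beta}\!\alpha)}$ in the exponential case and $\epsilon^{-\beta/(\alpha+\beta)}$ in the algebraic case. Plugging these into the master bound produces exactly the two claimed bounds.

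\medskip

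I would finish by taking $m^{*}=m^{*}(h^{*})$ and verifying that all applicability conditions (i.e.\ $h^{*}\le h_{\max}$ so that~\eqref{eq:chi2bound_simplified} is available, and $\gamma(h^{*})\leq \lambda_{\min}^{\bA}/4$) hold for all sufficiently small $\epsilon$. The main conceptual content is the choice $\delta(h^{*})=\epsilon^{1/(1+r)}$: nothing else works in both parametrizations simultaneously, and the algebraic manipulation of $\log_{\alpha}\beta$ versus $\log_{\beta}\alpha$ is the only place where one can stumble. Everything else is a direct substitution. I do not expect a real obstacle; the exercise is to track constants carefully and to treat the ceiling in $m^{*}$ cleanly using the definition of $K_{0}'$.
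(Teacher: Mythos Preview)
Your proposal is correct and follows essentially the same approach as the paper: both combine~\eqref{eq:mse_bound} with~\eqref{eq:chi2bound_simplified} to get $m^{*}(h)$, absorb the ceiling via $K_0'=4\|f\|_{L^\infty}^2\tilde K_0+1$ using $\epsilon\le 1$, and then pick the specific fidelity $\delta(h^{*})=\epsilon^{1/(1+r)}$ (equivalently $1/h^{*}=\log_{\alpha\beta}\epsilon^{-1}$ in the exponential case and $h^{*}=\epsilon^{1/(\alpha+\beta)}$ in the algebraic case). The only cosmetic difference is that the paper first sets up and analyzes an auxiliary convex optimization problem (Lemma~\ref{lmma:existenceandunique}) and reads off the asymptotic scaling of the minimizer from the first-order condition before fixing $h^{*}$, whereas you go straight to the choice via the unified substitution $y=\delta(h)$, $c(h)=y^{-r}$; your route is slightly more direct and handles both regimes at once, but the resulting $h^{*}$, $m^{*}$, and cost bounds are identical.
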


The rates on the error $\delta(h)$ and the cost $c(h)$ can arise, for example, in the Bayesian inverse problem setting in Section~\ref{section:section4}, where surrogate models are used to construct the surrogate densities $p_h$. Two concrete examples will be given in Section~\ref{section:section5}. Notice that $\gamma(h)$ from Assumption~\ref{assumption:surrogate2approx} influences implicitly the constant $\tilde{K}_0$ as shown in \eqref{eq:constantK0}, which amplifies Remark~\ref{rm:GammaNotZero} that it is unnecessary that $\gamma(h)$ goes to 0 for $h \to 0$ for Theorem~\ref{thm:MainTheorem} to hold.

Before we prove Theorem~\ref{thm:MainTheorem}, we state the following lemma that solves an auxiliary optimization problem highlighting the trade-off between the costs and fidelity of the surrogate model.

    \begin{lemma}
    Let $\hat{c}(\hat{\fid})$ and $\hat{e}(\hat{\fid})$ be continuous non-negative convex functions, which are not necessarily strictly convex. Let further $\hat{c}(\hat{\fid})$ decrease monotonically and $\hat{e}(\hat{\fid})$ increases monotonically as $\hat{\fid} \to \infty$. Let $\epsilon > 0$ be a tolerance and $\hat{M} \in \mathbb{N}$ be a constant independent of $\hat{\fid}$. Then, there exists a unique solution $(\hat{\fid}^*, \hat{\samples}^*)$ of
    
    \begin{equation}
    \begin{aligned}
    & \underset{\hat{\samples} \in \mathbb{R}, \hat{\fid} \ge 0}{\text{minimize}}
    & & \hat{\samples} \hicost  +  \hat{M} \hat{c}(\hat{\fid}) \\
    & \text{subject to}
    & & \frac{1}{\hat{\samples}}\hat{e}(\hat{\fid})  \le \epsilon\,.
    \end{aligned}
    \label{eq:optimization}
    \end{equation}

    \label{lmma:existenceandunique}
    \end{lemma}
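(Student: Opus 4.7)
The plan is to eliminate $\hat{\samples}$ from the program via the active constraint, then reduce to a one-dimensional convex minimization in $\hat{\fid}$; existence follows from coercivity, while uniqueness will be the delicate step.

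First I would observe that, for any fixed $\hat{\fid}$, the objective $\hat{\samples}\hicost + \hat{M}\hat{c}(\hat{\fid})$ is strictly increasing in $\hat{\samples}$ because $\hicost > 0$, while feasibility requires $\hat{\samples} \ge \hat{e}(\hat{\fid})/\epsilon$. Hence at any candidate optimum the constraint binds, $\hat{\samples}^* = \hat{e}(\hat{\fid}^*)/\epsilon$, and the program reduces to
\[
  \min_{\hat{\fid} \ge 0} g(\hat{\fid}), \qquad g(\hat{\fid}) := \frac{\hicost}{\epsilon}\hat{e}(\hat{\fid}) + \hat{M}\hat{c}(\hat{\fid}).
\]
Since $g$ is a positive linear combination of continuous, non-negative, convex functions, it is itself continuous and convex on $[0,\infty)$. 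Because $\hat{e}$ is convex and (non-trivially) monotone increasing, its right-derivative is eventually positive, so $\hat{e}(\hat{\fid}) \to \infty$ as $\hat{\fid}\to\infty$ and hence $g$ is coercive. The Weierstrass extreme value theorem, applied to the compact sublevel set $\{g \le g(0)\}$, then delivers a minimizer $\hat{\fid}^* \in [0,\infty)$; setting $\hat{\samples}^* = \hat{e}(\hat{\fid}^*)/\epsilon$ produces an optimal pair.

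The main obstacle I anticipate is uniqueness, since convexity of $g$ only guarantees that the minimizer set is a closed interval, and the hypotheses explicitly permit non-strict convexity. To preclude a non-degenerate minimizer interval $[\hat{\fid}_1,\hat{\fid}_2]$, I would argue on the right-derivatives. On such an interval $g'_+ \equiv 0$, and because convex functions have non-decreasing right-derivatives and the monotonicity hypotheses give $\hat{e}'_+ \ge 0$ and $\hat{c}'_+ \le 0$, the identity $\tfrac{\hicost}{\epsilon}\hat{e}'_+ + \hat{M}\hat{c}'_+ \equiv 0$ on $[\hat{\fid}_1,\hat{\fid}_2)$ forces each of $\hat{e}'_+$ and $\hat{c}'_+$ to be constant there. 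Thus $\hat{e}$ and $\hat{c}$ would both be affine on $[\hat{\fid}_1,\hat{\fid}_2]$ with exactly cancelling slopes. Closing the contradiction demands slightly more than strict monotonicity alone, namely that at least one of $\hat{e}$ or $\hat{c}$ cannot be affine on a non-degenerate subinterval; this is automatically satisfied in the concrete power-law and exponential cost/error regimes of Theorem~\ref{thm:MainTheorem} that motivate the lemma, and this is the step in which I expect the author's proof to make the most careful use of the specific monotonicity structure of the problem.
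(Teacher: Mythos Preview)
Your approach is essentially the paper's: eliminate $\hat{\samples}$ via the active constraint, reduce to the one-dimensional problem $\min_{\hat{\fid}\ge 0}\bigl(\tfrac{\hicost}{\epsilon}\hat{e}(\hat{\fid})+\hat{M}\hat{c}(\hat{\fid})\bigr)$, then argue existence by compactness and uniqueness by convexity. For existence the paper argues the infimum cannot lie at either end (as $\hat{\fid}\to 0$ or $\hat{\fid}\to\infty$) and then invokes continuity on a compact interval, whereas your coercivity-plus-sublevel-set route is a cleaner equivalent.

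On uniqueness you are in fact more careful than the paper. The paper's entire uniqueness argument is the single sentence ``Since the sum of convex functions is convex, we know that this objective function is convex. Hence, if a minimum exists it is unique.'' As you correctly anticipate, this is not valid under the stated hypotheses, which explicitly allow non-strict convexity; a flat segment in $g$ is not excluded by anything in the lemma. Your attempt to rule out an affine plateau via the right-derivatives, and your concession that this ultimately relies on the concrete power-law or exponential forms of $\hat{e}$ and $\hat{c}$ used in Theorem~\ref{thm:MainTheorem}, identifies a genuine gap that the paper simply does not address. So the step where you ``expect the author's proof to make the most careful use of the specific monotonicity structure'' is precisely the step the paper glosses over.
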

    \begin{proof}[Proof of Lemma~\ref{lmma:existenceandunique}]
    For any $\hat{\fid}$, the optimal $\hat{\samples}$ is the one that achieves equality in the constraint
    \[
        \hat{\samples}
        = \frac{\hat{e}(\hat{\fid})}{\epsilon} \, .
    \]
    Plugging this into the objective function gives the minimization problem over $\hat{\fid}$ only.
    \begin{equation}
        \underset{\hat{\fid} \ge 0}{\mathrm{minimize}}\quad \hicost  \frac{\hat{e}(\hat{\fid}) }{\epsilon}  + \hat{M} \hat{c}(\hat{\fid}) \, .
        \label{eq:optimization2}
    \end{equation}
    Since the sum of convex functions is convex, we know that this objective function is convex.  Hence, if a minimum exists it is unique.

    We next show that the infimum of the objective function cannot occur as $\hat{\fid}\to \infty$ or as $\hat{\fid} \to 0$.  Since $\hat{c}(\hat{\fid})$ is non-negative and decreasing we know that $\hat{c}(\hat{\fid}) \to c_0$ for some constant $c_0 \ge 0$.  Moreover, $\hat{e}(\hat{\fid})$ is increasing, so we know that there exists an $\hat{h}_{\max} < \infty$, such that any optimal solution $\hat{\fid}^*$ must satisfy $\hat{\fid}^* \le \hat{\fid}_{\max}$.  
    Similarly, since $\hat{e}(\hat{\fid})$ is non-negative and decreasing as $\hat{\fid} \to 0$ we know that $\hat{e}(\hat{\fid}) \to e_0$
    for some constant $e_0 \ge 0$ as $\hat{\fid}\to 0$.  Moreover, $\hat{c}(\hat{\fid})$ is increasing as $\hat{\fid}\to 0$, and since the objective function~\eqref{eq:optimization2} is monotonically increasing as $\hat{\fid}\to 0$, we know that there exists an $\hat{\fid}_{\min} > 0$, such that any optimal solution $\hat{\fid}^*$ must satisfy $\hat{\fid}^* \ge \hat{\fid}_{\min}$.
    Hence
    \[
        \underset{\hat{\fid} \ge 0}{\mathrm{minimize}}\quad \hicost  \frac{\hat{e}(\hat{\fid}) }{\epsilon}  + \hat{M}\hat{c}(\hat{\fid}) = \underset{\hat{\fid} \in [\hat{\fid}_{\min}, \hat{\fid}_{\max}]}{\mathrm{minimize}}\quad \hicost  \frac{\hat{e}(\hat{\fid}) }{\epsilon}  + \hat{M} \hat{c}(\hat{\fid})
    \]
    Since the objective function is continuous over a compact set, we know that a minimizer exists.
    \end{proof}
    
\begin{proof}[Proof of Theorem~\ref{thm:MainTheorem}]
Combining the result of Theorem~\ref{theorem:chi2bound} in Equation~\eqref{eq:chi2bound_simplified} with the bound \eqref{eq:mse_bound},
let
\[
    e(h) = 4\|f\|_{L^{\infty}}^2 \tilde{K}_0 \mathrm{e}^{K_1 \delta(h)} \, .
\]
Because the composition of convex functions $\delta(h)$ and $x\mapsto \mathrm{e}^x$ is still convex, we know that $e(h)$ must be convex and therefore satisfies the assumptions of Lemma~\ref{lmma:existenceandunique}, meaning that a unique solution $\hat{h}^*,\hat{m}^* \in \mathbb{R}$ exists.

Consider $\cost(\fid) = \beta^{1/\fid}$ and $\acc(\fid) = \alpha^{-1/\fid}$ with $\alpha,\beta > 1$.   We can remove the constraint to instead minimize
\begin{equation}
	 \underset{\fid \ge 0}{\mathrm{minimize}}\quad \frac{4\|f\|_{L^{\infty}}^2 \tilde{K}_0 \hicost }{\epsilon} \mathrm{e}^{ K_1 \acc(\fid) }+ M \cost(\fid) ,
\label{eq:optimization_cais2}
 \end{equation}
which is analogous to \eqref{eq:optimization2}.  By setting the derivative of \eqref{eq:optimization_cais2} with respect to $\fid$ to zero, the optimal solution satisfies
    \[
        \frac{4\|f\|_{L^{\infty}}^2 \tilde{K}_0 K_1 \hicost \log \alpha}{M \log \beta} \mathrm{e}^{K_1 \alpha^{-1/h}} = \epsilon (\alpha \beta)^{1/h} \, ,
    \]
    meaning that $1/\hat{\fid}^* \in \mathcal{O}(\log_{\alpha \beta} \epsilon^{-1})$ as $\epsilon \to 0$ since the left-hand-side must approach a constant. Motivated by this observation, we set $1/h^* = \log_{\alpha\beta} \epsilon^{-1}$ exactly and then the number of samples needed is
    \[
    m^* = \ceil{\hat{m}^*}  = \ceil{\frac{4\|f\|_{L^{\infty}}^2 \tilde{K}_0}{\epsilon} \mathrm{e}^{K_1 \epsilon^{1/(1 + \log_{\alpha}\beta)}}  } \le \frac{4\|f\|_{L^{\infty}}^2 \tilde{K}_0}{\epsilon} \mathrm{e}^{ K_1 \epsilon^{1/(1 + \log_{\alpha}\beta)} } + 1 \, .
    \]
    where we have used that $\log_{\alpha\beta}\epsilon = \frac{\log_{\alpha} \epsilon}{1 + \log_{\alpha}\beta} = \frac{\log_{\beta} \epsilon}{1 + \log_{\beta}\alpha}$.  Since $\epsilon \le 1$ we know that $\mathrm{e}^{K_1 \epsilon^{1/(1 + \log_{\alpha}\beta)}}/\epsilon > 1$, and so 
    \[
        m^*  \le K_0' \frac{\mathrm{e}^{K_1\epsilon^{1/(1 + \log_{\alpha}\beta)}}}{\epsilon}\, .
    \]
    Plugging this in for $m$ into the objective function, gives an upper bound on the total computational costs
    \[
     \mathrm{cost}(\hat{f}_{h^*,m^*}) \le  \frac{\hicost K_0'}{\epsilon} \mathrm{e}^{ K_1 \epsilon^{1/(1 + \log_{\alpha}\beta)} } + M  \epsilon^{-1/(1 + \log_{\beta}\alpha)}  \, .
    \]

Now consider $\cost(\fid) = \fid^{-\beta}$ and $\acc(\fid) = \fid^{\alpha}$ with $\alpha,\beta \ge 1$. Set again the derivative to zero to find that the optimal solution satisfies
    \[
    \frac{4\|f\|_{L^{\infty}}^2\hicost \tilde{K}_0 K_1 }{M} \left( \frac{\alpha}{\beta} \right) \mathrm{e}^{ K_1 \fid^{\alpha} } \fid^{\alpha + \beta} = \epsilon \, ,
    \]
    so that $\hat{h}^* \in \mathcal{O}(\epsilon^{1/(\alpha + \beta)})$ as $\epsilon \to 0$.  If we set $h^* = \epsilon^{1/(\alpha + \beta)}$, then the number of samples needed is
    \[
    m^* = \ceil{\hat{m}^*} = \ceil{\frac{4\|f\|_{L^{\infty}}^2\tilde{K}_0}{\epsilon} \mathrm{e}^{K_1 \epsilon^{\alpha/(\alpha + \beta)} }} \le \frac{4\|f\|_{L^{\infty}}^2 \tilde{K}_0}{\epsilon} \mathrm{e}^{K_1 \epsilon^{\alpha/(\alpha + \beta)} } + 1 \le \frac{K_0'}{\epsilon} \mathrm{e}^{K_1 \epsilon^{\alpha/(\alpha + \beta)} }\, ,
    \]
    with total computational cost bounded as
    \[
    \mathrm{cost}(\hat{f}_{h^*,m^*}) \le \frac{\hicost K_0'}{\epsilon} \mathrm{e}^{K_1 \epsilon^{\alpha/(\alpha + \beta)} } + M \epsilon^{-\beta/(\alpha + \beta)}  \, .
    \]

\end{proof}

\begin{remark}
Although we have assumed that training costs correspond to fitting the Laplace approximation, Lemma~\ref{lmma:existenceandunique} shows that the results will extend more generally to any approximation where the costs of fitting the biasing density is convex in the fidelity $h$.
\end{remark}

\subsubsection{Discussion of cost complexity bounds of context-aware MFIS}
We now compare the cost bounds of the context-aware MFIS estimators $\hat{f}_{\fid^*, m^*}$ derived in Theorem~\ref{thm:MainTheorem} with the costs of fixed-fidelity MFIS estimators $\hat{f}_{\bar{h}, \bar{m}}$, where the fidelity $\bar{h}$ is fixed independent of $\epsilon$. The number of samples $\bar{m}$ is selected depending on the tolerance $\epsilon$ as
\[
    \bar{m} = \inf\left\{ m \in \mathbb{N} : \frac{e(\bar{h})}{m} \le \epsilon   \right\} \, ,
\]
analogously to the context-aware MFIS estimator. Note that the sample size depends as well on the fidelity $\bar{h}$.  The costs of the fixed-fidelity MFIS estimator are 
\[
\operatorname{cost}(\hat{f}_{\bar{h}, \bar{m}}) =  \bar{m}C + M c(\bar{h})\,.
\]
If $\delta(h) = \alpha^{-1/h}$ and $c(h) = \beta^{1/h}$, then the costs of the fixed-fidelity estimator are bounded as
    \[
     \mathrm{cost}(\hat{f}_{\bar{h},\bar{m}}) \le  \overline{\mathrm{cost}}(\hat{f}_{\bar{h},\bar{m}}) = \frac{\hicost K_0'}{\epsilon} \mathrm{e}^{  K_1 \alpha^{-1/\bar{h}} } + M \beta^{1/\bar{h}} \, ,
    \] 
and if $\delta(h) = h^{\alpha}$ and $c(h) = h^{-\beta}$ then the costs are bounded as
\[
    \mathrm{cost}(\hat{f}_{\bar{h},\bar{m}}) 
    \le \overline{\mathrm{cost}}(\hat{f}_{\bar{h},\bar{m}}) = \frac{ \hicost K_0'}{\epsilon} \mathrm{e}^{ K_1 \bar{h}^{\alpha} } + M \bar{h}^{-\beta} \, .
    \]
    
We now compare the costs of the context-aware MFIS and the fixed-fidelity MFIS estimators by comparing their cost upper bounds $\overline{\mathrm{cost}}$ as $\epsilon \to 0$.  First, consider the case where $\delta(h) = \alpha^{-1/h}$ and $c(h) = \beta^{1/h}$.  As $\epsilon \to 0$, we have that
\[
    \lim_{\epsilon \to 0} \frac{\overline{\mathrm{cost}}(\hat{f}_{\bar{h},\bar{m}})}{\overline{\mathrm{cost}}(\hat{f}_{h^*,m^*})} = \lim_{\epsilon \to 0} \frac{\frac{\hicost K_0'}{\epsilon} \mathrm{e}^{  K_1 \alpha^{-1/\bar{h}} } + M \beta^{1/\bar{h}}}{\frac{\hicost K_0'}{\epsilon} \mathrm{e}^{ K_1 \epsilon^{1/(1 + \log_{\alpha}\beta)} } + M  \epsilon^{-1/(1 + \log_{\beta} \alpha)}} \, . 
\]
Multiply the numerator and denominator by $\epsilon$ to get
\[
    \lim_{\epsilon \to 0} \frac{\hicost K_0' \mathrm{e}^{  K_1 \alpha^{-1/\bar{h}} } + \epsilon M \beta^{1/\bar{h}}}{\hicost K_0' \mathrm{e}^{ K_1 \epsilon^{1/(1 + \log_{\alpha} \beta)} } + M  \epsilon^{1-1/(1 + \log_{\beta}\alpha)}} \, .
\]
As $\epsilon \to 0$, the numerator goes to $CK_0' \mathrm{e}^{K_1 \alpha^{-1/\bar{h}}}$ and the denominator goes to $CK_0'$ since $\alpha > 1$.  Therefore, the speedup obtained with the context-aware MFIS estimator in the limit of $\epsilon \to 0$ is
\[
    \lim_{\epsilon \to 0} \frac{\overline{\mathrm{cost}}(\hat{f}_{\bar{h},\bar{m}})}{\overline{\mathrm{cost}}(\hat{f}_{h^*,m^*})} = \mathrm{e}^{K_1 \alpha^{-1/\bar{h}}} > 1 \, .
\]

Now consider the other case where $\delta(h) = h^{\alpha}$ and $c(h) = h^{-\beta}$.  We have that
\[
    \lim_{\epsilon \to 0} \frac{\overline{\mathrm{cost}}(\hat{f}_{\bar{h},\bar{m}})}{\overline{\mathrm{cost}}(\hat{f}_{h^*,m^*})} = \lim_{\epsilon \to 0} \frac{\frac{ \hicost K_0'}{\epsilon} \mathrm{e}^{ K_1 \bar{h}^{\alpha} } + M \bar{h}^{-\beta}}{\frac{\hicost K_0'}{\epsilon} \mathrm{e}^{K_1 \epsilon^{\alpha/(\alpha + \beta)} } + M \epsilon^{-\beta/(\alpha + \beta)} } \, .
\]
Multiplying both the numerator and denominator by $\epsilon$ gives
\[
    \lim_{\epsilon \to 0} \frac{ \hicost K_0' \mathrm{e}^{ K_1 \bar{h}^{\alpha} } + \epsilon M \bar{h}^{-\beta}}{\hicost K_0' \mathrm{e}^{K_1 \epsilon^{\alpha/(\alpha + \beta)} } + M \epsilon^{1-\beta/(\alpha + \beta)}} \, .
\]
As $\epsilon \to 0$, the numerator converges to $CK_0' \mathrm{e}^{K_1 \bar{h}^{\alpha}}$, and since $\beta/(\alpha + \beta) < 1$, the denominator converges to $CK_0'$.  Hence, the speedup obtained with the proposed context-aware MFIS estimator in the limit $\epsilon \to 0$ is
\[
    \lim_{\epsilon \to 0} \frac{\overline{\mathrm{cost}}(\hat{f}_{\bar{h},\bar{m}})}{\overline{\mathrm{cost}}(\hat{f}_{h^*,m^*})} = \mathrm{e}^{K_1 \bar{h}^{\alpha}} > 1 \, .
\]

In both cases we observe that as the tolerance $\epsilon \to 0$, the dominant term for the MFIS estimator cost approaches order $\mathcal{O}(1/\epsilon)$ and the bulk of the cost shifts to the online sampling cf.~Section~\ref{section:section2p5}.  We see that the speedup as $\epsilon \to 0$ depends on the rate of the error $\delta(\bar{h})$ going to zero.

\subsection{Computational procedure}
\label{section:section3p5}
Algorithm~\ref{alg:cais_procedure} summarizes the context-aware importance sampling procedure. Given constants $\tilde{K}_0,K_1,\hicost,M,\|f\|_{L^{\infty}}$, and the tolerance $\epsilon$ as well as the cost and accuracy functions $\cost$ and $\acc$, the context-aware importance sampling Algorithm~\ref{alg:cais_procedure} first solves the optimization problem~\eqref{eq:optimization} for $(\hat{\fid}^*, \hat{\samples}^*)$.  A Laplace approximation to the surrogate density $p_{\fid^*}$ is then computed using Newton's method.  In particular, we use the Newton-CG method where both the gradient and Hessian are computed using finite differences.  The Hessian at the mode is then inverted directly to obtain the covariance of the Laplace approximation.  This concludes the offline phase of finding the biasing density.  For the online phase we draw $\samples^* = \ceil{\hat{\samples}^*}$ samples from the Laplace approximation $q_{\fid^*}$ and re-weight using the un-normalized high-fidelity density $\tilde{p}$ using the estimator \eqref{eq:mfis}.

\begin{algorithm}[t]
\begin{algorithmic}[1]
\State{Constants $\tilde{K}_0,K_1,\hicost,\epsilon,M,\|f\|_{L^{\infty}}$ and functions $\cost,\acc$}
\State{Solve the optimization problem~\eqref{eq:optimization} for $(\fid^*, \hat{\samples}^*)$ using $\|f\|_{L^{\infty}}, \tilde{K}_0, K_1,\hicost,M,\epsilon,\cost,\acc$}
\State{Compute a Laplace approximation $q_{\fid^*}$ of $p_{\fid^*}$ with $M$ evaluations of $\tilde{p}_{\fid^*}$}
\State{Draw $\samples^* = \ceil{\hat{\samples}^*}$ i.i.d. samples $\{\params^{(i)}\}_{i=1}^{\samples^*}$ from $q_{\fid^*}$}
\State{Compute $\hat{f}_{\fid^*,\samples^*}$ using~\eqref{eq:mfis}}
\Return{Estimate $\hat{f}_{\fid^*,\samples^*}$}
\end{algorithmic}
\caption{Context-aware importance sampling}
\label{alg:cais_procedure}
\end{algorithm}

Algorithm~\ref{alg:cais_procedure} requires the constants $\tilde{K}_0, K_1, C, M,\|f\|_{L^{\infty}}$. Similar to other multi-level and multi-fidelity methods, we propose to first perform a pilot study to estimate these constants before using them in the computational procedure. Such pilot studies may be expensive; however, since the test function $f$ is independent of the constants, we only need to estimate these constants once and can then re-use them to compute a variety of statistics with respect to the target distribution $p$.

\section{Bayesian inverse problems}
\label{section:section4}

We now apply the context-aware MFIS estimator for inference in Bayesian inverse problems where the target $p$ is a posterior distribution and we are interested in expectations $\E{f}{p}$ of this distribution.  Section~\ref{section:section4p1} describes the general setup of a Bayesian inverse problem and Section~\ref{section:section4p2} applies the results of Section~\ref{section:section3} to the case where $p$ is a posterior distribution.

\subsection{Setup of a Bayesian inverse problem}
\label{section:section4p1}

   Let data $\obs \in \R^{\dimy}$ be generated by an unknown parameter $\params_{\mathrm{truth}} \in \R^{\dimx}$ with a Gaussian noise model,
   \[
   \obs = \map(\params_{\mathrm{truth}}) + \noisevar ,
   \]
   where $\noisevar \sim N(0,\noise)$, $\noise \in \R^{\dimy \times \dimy}$ is the covariance matrix (symmetric and positive definite) of the added noise, and $\map : \paramdomain \to \R^{\dimy}$ is the high-fidelity parameter-to-observable map.  Let $\pr$ denote a prior distribution over the parameter $\params$, so that the negative log-posterior has the form
    \[
   -\log p(\params) =  \pot(\params) = \frac{1}{2}\left\| \obs - \map(\params) \right\|_{\noise^{-1}}^2 - \log \pr(\params) \, .
    \]
    The norm is defined as $\|\bfv\|_{\noise^{-1}}^2 = \langle \noise^{-1} \bfv,\ \bfv \rangle$.  While it is possible to use the prior distribution as a biasing density, if the posterior contracts around the data then the $\chi^2$ divergence from the posterior to the prior may be very large resulting in a high variance estimator with a low effective sample size.

Let $\map_{\fid}$ denote the surrogate parameter-to-observable map with fidelity $\fid$ and let it be such that the series $\map_{\fid}(\params) \to \map(\params)$ converges pointwise for each $\params \in \paramdomain$.  Additionally, we assume that $\map,\map_h \in \mathcal{C}^2(\paramdomain)$.  In many cases the parameter-to-observable map $\map$ is a function of an intermediate state variable $u$, such as the full solution to a parametrized partial differential equation (PDE) depending on the parameters $\params$.  The surrogate parameter-to-observable map $\map_{\fid}$ is given by approximating this state variable $u$ with an approximation $u_{\fid}$.  The approximation for the state variable $u_{\fid}$ could be given by finite elements~\cite{FEM}, finite difference~\cite{leveque}, a different time step for an ordinary differential equation~\cite{leveque}, finitely many terms in a Karhunen-Lo{\`e}ve expansion~\cite{UQ}, and others.

We consider the case where the prior $\pr$ is Gaussian $N(\prmean, \prcov)$, so that we can write the potential from Assumption~\ref{assumption:potential} as
    \begin{equation}
    \pot(\params) = \frac{1}{2}\left\| \obs - \map(\params) \right\|_{\noise^{-1}}^2 + \frac{1}{2}(\params - \prmean)^T \prcov^{-1} (\params - \prmean) .
    \label{eq:bip_pot}
    \end{equation}
    With a Gaussian prior the resulting posterior distribution is always sub-Gaussian since we can take the matrix $\bA = \frac{1}{4} \prcov^{-1}$ in Lemma~\ref{lemma:subgaussian}.  The potentials $\pot_{\fid}$ are defined similarly but with the surrogate maps $\map_{\fid}$ replacing $\map$.

\subsection{Bounding $\chi^2$ divergence with model error}
\label{section:section4p2}

We now translate bounds on the model error between $\map$ and $\map_{\fid}$ to the $\chi^2$ divergence $\chisq{p}{q_{\fid}}$, where $q_{\fid}$ is a Laplace approximation to the surrogate posterior $p_{\fid}$.  The next two assumptions allow us to make the transition.

    \begin{assumption}
    The high-fidelity parameter-to-observable map $\map$ is globally Lipschitz meaning there exists a constant $B > 0$ such that for all $\params, \tilde{\params} \in \paramdomain$
    \[
   	 \|\map(\params) - \map(\tilde{\params})\| \le B \|\params - \tilde{\params}\| \, .
    \]
    \label{assumption:lipschitz}
    \end{assumption}

	Assumption~\ref{assumption:lipschitz} is almost the Lipschitz Assumption 2.7(ii) from \cite{S} except there the constant $B$ only needs to hold for bounded sets of $\params$.  Assumption~\ref{assumption:lipschitz} is satisfied if the map $\map$ is linear and smooth, for example.

    \begin{assumption}
    For all $\params \in \paramdomain$ and $\fid$ we have
    \[
    \|\map_{\fid}(\params) - \map(\params)\| \le \tilde{\delta}(\fid) \tilde{\tau}(\params)
    \]
    with $\tilde{\delta}(\fid) \to 0$ as $\fid \to 0$ with $\tilde{\tau}(\params)$ independent of $\fid$.
    \label{assumption:modelaccuracy}
    \end{assumption}

Assumption~\ref{assumption:modelaccuracy} is similar to Assumption (4.11) in Corollary 4.9 of \cite{S}, although the pointwise bound is also looser there than here for the same reason as given in Remark \ref{remark:pointwise}.  Theorem~\ref{theorem:bip_bound} is analogous to Theorem~\ref{theorem:chi2bound} from earlier but now is applied specifically to the Bayesian inverse problem.

    \begin{theorem}
    If Assumptions \ref{assumption:lipschitz} and \ref{assumption:modelaccuracy} are satisfied with $|\tilde{\tau}(\params)| \le \|\params\| + \tilde{\tau
    }_{0}$ for some $\tilde{\tau}_{0} > 0$, then Assumption \ref{assumption:hi2low} is also satisfied with
    \[
        \delta(h) = \left(\frac{2B + 1}{\kappa_{\min}}\right) \tilde{\delta}(h)
    \]
    and $\tau(\params)$ a quadratic function of $\|\params\|$ that is independent of $h$.
    \label{theorem:bip_bound}
    \end{theorem}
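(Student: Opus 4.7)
The strategy is to directly expand the difference of potentials and convert the model-error bound in Assumption~\ref{assumption:modelaccuracy} into a potential-error bound of the form required by Assumption~\ref{assumption:hi2low}. Since the Gaussian prior contribution in \eqref{eq:bip_pot} is identical for $\pot$ and $\pot_{\fid}$, it cancels and only the data-misfit terms remain:
\[
\pot_{\fid}(\params) - \pot(\params) = \tfrac{1}{2}\|\obs - \map_{\fid}(\params)\|_{\noise^{-1}}^2 - \tfrac{1}{2}\|\obs - \map(\params)\|_{\noise^{-1}}^2.
\]
First I would apply the polarization identity $\|a\|^2 - \|b\|^2 = \langle a+b,\ a-b\rangle$ in the $\noise^{-1}$ inner product to rewrite this difference as $\tfrac{1}{2}\langle 2\obs - \map(\params) - \map_{\fid}(\params),\ \map(\params) - \map_{\fid}(\params)\rangle_{\noise^{-1}}$.

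Next I would apply Cauchy--Schwarz in the $\noise^{-1}$ inner product and replace the weighted norm with the Euclidean norm via $\|v\|_{\noise^{-1}} \le \kappa_{\min}^{-1/2}\|v\|$, where $\kappa_{\min}$ is the smallest eigenvalue of $\noise$. Each of the two norm factors contributes a factor $\kappa_{\min}^{-1/2}$, yielding the overall $1/\kappa_{\min}$ prefactor that appears in the statement. The second factor $\|\map(\params) - \map_{\fid}(\params)\|$ is controlled directly by Assumption~\ref{assumption:modelaccuracy} together with the hypothesis $|\tilde{\tau}(\params)| \le \|\params\| + \tilde{\tau}_0$, which gives an affine-in-$\|\params\|$ estimate with prefactor $\tilde{\delta}(\fid)$.

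For the first factor $\|2\obs - \map(\params) - \map_{\fid}(\params)\|$, I would use Assumption~\ref{assumption:lipschitz} to bound $\|\map(\params)\| \le \|\map(0)\| + B\|\params\|$, then apply the triangle inequality and Assumption~\ref{assumption:modelaccuracy} once more to bound $\|\map_{\fid}(\params)\| \le \|\map(\params)\| + \tilde{\delta}(\fid)(\|\params\| + \tilde{\tau}_0)$. Summing these estimates produces a bound of the form $2\|\obs\| + 2\|\map(0)\| + (2B + \tilde{\delta}(\fid))\|\params\| + \tilde{\delta}(\fid)\tilde{\tau}_0$. For $\fid$ sufficiently small that $\tilde{\delta}(\fid) \le 1$ (which holds since $\tilde{\delta}(\fid) \to 0$), the coefficient on $\|\params\|$ is at most $2B+1$, and the remaining constants can be absorbed into $\tau(\params)$. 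Multiplying this affine-in-$\|\params\|$ bound against the affine bound for the second factor yields a function that is at most quadratic in $\|\params\|$ with coefficients independent of $\fid$, so setting $\delta(\fid) = \tfrac{2B+1}{\kappa_{\min}}\tilde{\delta}(\fid)$ and defining $\tau(\params)$ to be the resulting quadratic in $\|\params\|$ verifies Assumption~\ref{assumption:hi2low}.

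The main technical hurdle is purely bookkeeping: obtaining the precise constant $2B+1$ (rather than the $2B$ that the Lipschitz piece alone would produce) requires absorbing the lower-order contribution $\tilde{\delta}(\fid)\|\params\|$ coming from $\|\map_{\fid}(\params)\|-\|\map(\params)\|$ into the leading coefficient, using that $\tilde{\delta}(\fid)\le 1$ for $\fid$ small. No deep analytic ideas beyond polarization, Cauchy--Schwarz, and the triangle inequality are required, and the quadratic growth of $\tau(\params)$ arises naturally from the product of the two affine-in-$\|\params\|$ bounds.
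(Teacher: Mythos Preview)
Your proposal is correct and follows essentially the same route as the paper's proof: cancel the prior term, expand the difference of quadratic misfits, apply Cauchy--Schwarz, invoke the Lipschitz and model-accuracy assumptions to get affine-in-$\|\params\|$ bounds on each factor, and then multiply to obtain a quadratic $\tau(\params)$ with prefactor $\tfrac{2B+1}{\kappa_{\min}}\tilde{\delta}(\fid)$. The only cosmetic differences are that the paper expands $\|\zeta\|^2-\|\zeta-\Delta\|^2=2\langle\Delta,\zeta\rangle-\|\Delta\|^2$ and treats the cross term and the square term separately (rather than using the polarization identity as a single inner product), and it anchors the Lipschitz estimate at $\params_{\mathrm{truth}}$ via $\obs=\map(\params_{\mathrm{truth}})+\noisevar$ instead of at $0$; neither change affects the argument or the resulting constant.
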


\begin{proof}
    Using the form of the log-posterior~\eqref{eq:bip_pot} we write
    \begin{align*}
        \left| \pot_{\fid}(\params) - \pot(\params) \right|
        &= \left| \|\map_{\fid}(\params) - \obs\|_{\noise^{-1} }^2 - \|\map(\params) - \obs\|_{\noise^{-1} }^2 \right|
    \end{align*}
    since the prior terms cancel. To simplify notation, set $\Delta(\params) = \map(\params) - \map_{\fid}(\params)$ and $\zeta(\params) = \map(\params) - \obs$, so that $\zeta(\params) - \Delta(\params) = \map_{\fid}(\params) - \obs$.  Now, we can instead write
\begin{align*}
        \left| \pot_{\fid}(\params) - \pot(\params) \right|
        &= \left| \|\zeta(\params)\|^2_{\noise^{-1}} - \|\zeta(\params) - \Delta(\params)\|_{\noise^{-1}}^2 \right|\\
        &= \left|  \|\zeta(\params)\|^2_{\noise^{-1}} -  \left \langle \noise^{-1}\left( \zeta(\params) - \Delta(\params) \right),\ \zeta(\params) - \Delta(\params)  \right \rangle  \right| \\
        &= \left| 2\langle \Delta(\params) ,\noise^{-1} \zeta(\params)\rangle - \|\Delta(\params)\|_{\noise^{-1}}^2 \right|\, .
\end{align*}
Applying the triangle inequality and then the Cauchy-Schwarz inequality to this last line gives
\begin{equation}
         \left| \pot_{\fid}(\params) - \pot(\params) \right|
         \le 2\|\Delta(\params)\|  \|\noise^{-1}\zeta(\params)\| + \|\Delta(\params)\|^2_{\noise^{-1}}\, .
\label{eq:thm4part1}
\end{equation}
Using that $\obs = \map(\params_{\mathrm{truth}}) + \noisevar$ and the triangle inequality gives
\begin{align*}
        \|\noise^{-1}\zeta(\params)\| &= \|\noise^{-1} (\map(\params) - \obs)\|\\
        &\le \|\noise^{-1} (\map(\params) - \map(\params_{\mathrm{truth}}))\| + \|\noise^{-1}\noisevar\|\, .
\end{align*}
Assumption~\ref{assumption:lipschitz} then gives the bound
\begin{equation}
	\|\noise^{-1}\zeta(\params)\| \le \frac{B}{\kappa_{\min}}\|\params - \params_{\mathrm{truth}}\| + \|\noise^{-1}\noisevar\| ,
	\label{eq:thm4part2}
\end{equation}
where $\kappa_{\min} > 0$ is the smallest eigenvalue of the covariance matrix $\noise$, i.e., the direction along which the posterior is most peaked.  Similarly, we bound
\begin{equation}
	\| \Delta(\params) \|^{2}_{\noise^{-1}} = \langle \noise^{-1}\Delta(\params),\ \Delta(\params) \rangle \le \frac{1}{\kappa_{\min}} \|\Delta(\params)\|^2\, .
\label{eq:thm4part3}
\end{equation}
Substituting bounds~\eqref{eq:thm4part2} and~\eqref{eq:thm4part3} into~\eqref{eq:thm4part1} yields
\[
        \left| \pot_{\fid}(\params) - \pot(\params) \right|
        \le 2\left(\frac{B}{\kappa_{\min}}\left( \|\params - \params_{\mathrm{truth}}\|  \right) + \|\noise^{-1}\noisevar\|\right)\|\Delta(\params)\| + \frac{1}{\kappa_{\min}}\|\Delta(\params)\|^2,
\]
 and the triangle inequality gives
     \begin{equation}
        \left| \pot_{\fid}(\params) - \pot(\params) \right|
        \le 2\left(\frac{B}{\kappa_{\min}}\left( \|\params\| + \|\params_{\mathrm{truth}}\|  \right) + \|\noise^{-1}\noisevar\|\right)\|\Delta(\params)\| + \frac{1}{\kappa_{\min}}\|\Delta(\params)\|^2 \, .
        \label{eq:thm4part4}
    \end{equation}

Assumption~\ref{assumption:modelaccuracy} along with the assumption that $|\tilde{\tau}(\params)| \le \|\params\| + \tilde{\tau}_{0}$ says $\|\Delta(\params)\|  \le \tilde{\delta}(\fid) \left( \|\params\| + \tilde{\tau}_{0} \right)$, so we get that
\begin{align*}
	\left| \pot_{\fid}(\params) - \pot(\params) \right|
        &\le 2\left(\frac{B}{\kappa_{\min}}\left( \|\params\| + \|\params_{\mathrm{truth}}\|  \right) + \|\noise^{-1}\noisevar\|\right) \tilde{\delta}(\fid) \left( \|\params\| + \tilde{\tau}_{0} \right)+ \frac{1}{\kappa_{\min}}\tilde{\delta}(\fid)^2 \left( \|\params\| + \tilde{\tau}_{0} \right)^2 \, ,
\end{align*}
and thus
\begin{align*}
    \left| \pot_{\fid}(\params) - \pot(\params) \right|
        \le& \left( \frac{2B}{\kappa_{\min}} \|\params_{\mathrm{truth}}\| + 2\|\noise^{-1}\noisevar\| + \frac{\tilde{\delta}(h) \tilde{\tau}_0}{\kappa_{\min}}  \right)  \tilde{\delta}(h) \tilde{\tau}_0 \\
        &+ \left( \frac{2B}{\kappa_{\min}}\tilde{\tau}_0 + \frac{2B}{\kappa_{\min}}\|\params_{\mathrm{truth}}\| + \frac{2}{\kappa_{\min}} \tilde{\delta}(h) \tilde{\tau}_0 + 2\|\noise^{-1}\noisevar\| \right) \tilde{\delta}(h) \|\params\|\\
        &+ \left( \frac{2B}{\kappa_{\min}} + \frac{1}{\kappa_{\min}} \tilde{\delta}(h) \right) \tilde{\delta}(h) \| \params\|^2\,.
\end{align*}
Using that $\tilde{\acc}(\fid) \le 1$ for all $\fid$ sufficiently small and $\|\params\| \le 1 + \|\params\|^2$ gives
\[
	\left| \pot_{\fid}(\params) - \pot(\params) \right| \le \delta(\fid)\tau(\params) ,
\]
where 
\[
    \delta(h) = \left(\frac{2B + 1}{\kappa_{\min}}\right) \tilde{\delta}(h) 
\]
is as in Assumption~\ref{assumption:hi2low} and $\tau(\params)$ is quadratic in $\|\params\|$ and is bounded independent of $h$.
\end{proof}

\begin{corollary}
Suppose that Theorem~\ref{theorem:bip_bound} and Proposition~\ref{prop:laplace} both apply.  Then the cost complexity of the context-aware importance sampling estimator with a Laplace approximation biasing density has cost complexity given by Theorem~\ref{thm:MainTheorem}.
\end{corollary}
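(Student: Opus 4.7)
The plan is to verify that, in the Bayesian inverse problem setting, the hypotheses of Theorem~\ref{thm:MainTheorem} hold, after which its cost complexity bounds transfer verbatim. Since Theorem~\ref{thm:MainTheorem} takes as input only that Theorem~\ref{theorem:chi2bound} and Proposition~\ref{prop:laplace} apply, and Proposition~\ref{prop:laplace} is given by hypothesis, the task reduces to verifying Assumptions~\ref{assumption:potential}--\ref{assumption:surrogate2approx} together with the quadratic growth conditions $\tau(\params) \le \|\params\|^2 + \tau_0$ and $\omega(\params) \le \|\params\|^2 + \omega_0$ used in Theorem~\ref{theorem:chi2bound}.

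For Assumption~\ref{assumption:potential}, the BIP setup of Section~\ref{section:section4p1} writes $p$ and $p_{\fid}$ in exponential form with potentials given by~\eqref{eq:bip_pot} and its surrogate analogue; the $\mathcal{C}^2$ regularity and pointwise convergence $\pot_{\fid} \to \pot$ follow from the corresponding assumptions on $\map,\map_{\fid}$, and the Laplace biasing density $q_{\fid}$ is Gaussian with a quadratic $\mathcal{C}^2$ potential. Assumption~\ref{assumption:subgaussian} is immediate from the Gaussian prior with $\bA = \tfrac{1}{4}\prcov^{-1}$, as noted after~\eqref{eq:bip_pot}. Assumption~\ref{assumption:hi2low} (with the required quadratic growth) is supplied by Theorem~\ref{theorem:bip_bound}, after absorbing the linear terms via $\|\params\| \le 1 + \|\params\|^2$, giving $\delta(\fid) = \tfrac{2B+1}{\kappa_{\min}}\tilde{\delta}(\fid)$ and a $\tau$ that fits the form $\tau(\params)\le \|\params\|^2+\tau_0$. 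Assumption~\ref{assumption:surrogate2approx} is supplied by Proposition~\ref{prop:laplace}, with $\omega(\params) = \|\params\|^2 + \omega_0$ and constant $\gamma(\fid) = 1/\sigma_{\min}^2 + v$ already in the required form.

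The one remaining compatibility condition of Theorem~\ref{theorem:chi2bound} is $\gamma(\fid) \le \tfrac{1}{4}\lambda_{\min}^{\bA} = \tfrac{1}{16}\lambda_{\min}^{\prcov^{-1}}$, which is an implicit sizing requirement on the prior covariance relative to the Laplace curvature and the lower-bound constant $v$ from Proposition~\ref{prop:laplace}; I would record this as an additional standing assumption for the corollary, since it does not follow from Proposition~\ref{prop:laplace} alone. Once it is in place, Theorem~\ref{theorem:chi2bound} delivers the bound~\eqref{eq:chi2bound_simplified} on $\chisq{p}{q_{\fid}}$ with the same constants $K_0',K_1$, and then Theorem~\ref{thm:MainTheorem} is invoked as a black box to yield the claimed cost complexity in both the exponential and polynomial regimes for $c(\fid)$ and $\acc(\fid)$. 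The main conceptual work, which is really the point of Section~\ref{section:section4}, is the translation through Theorem~\ref{theorem:bip_bound} from a model-level error bound on $\map_{\fid}$ to a potential-level bound on $\pot_{\fid}$ so that the Gaussian-prior tails still control the importance weights; past that translation, the corollary is essentially book-keeping.
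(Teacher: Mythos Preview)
Your approach is correct and is precisely what the paper intends: the corollary is stated there without proof, as an immediate consequence of feeding Theorem~\ref{theorem:bip_bound} (which supplies Assumption~\ref{assumption:hi2low} with the required quadratic $\tau$) and Proposition~\ref{prop:laplace} (which supplies Assumption~\ref{assumption:surrogate2approx} with $\omega(\params)=\|\params\|^2+\omega_0$) into Theorem~\ref{thm:MainTheorem}, with Assumptions~\ref{assumption:potential} and~\ref{assumption:subgaussian} coming from the Gaussian-prior BIP setup. Your flag about the compatibility condition $\gamma(\fid) \le \tfrac14\lambda_{\min}^{\bA}$ is a legitimate observation that the paper leaves implicit; with $\gamma$ constant from Proposition~\ref{prop:laplace} and $\bA=\tfrac14\prcov^{-1}$, it is indeed an extra sizing constraint that the corollary tacitly assumes rather than derives.
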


\section{Numerical Results}
\label{section:section5}

This section demonstrates our context-aware importance sampling approach on two examples.  All runtime measurements were performed on compute nodes equipped with Intel Xeon Gold 6148 2.4GHz processors and 192GB of memory using a Python 3.6 implementation.

\subsection{Steady-state heat conduction}

In the first example we consider a steady-state heat diffusion model with constant heat source and infer a 6-dimensional variable diffusivity.

\subsubsection{Problem Setup}

Let $\spatialdomain = (0,1) \subset \R$ and $\paramdomain = \R^6$ and consider the PDE
\begin{align}
\begin{split}
	- \left( \exp\left( k(x; \params) \right) u_x(x; \params) \right)_x &= 1, \quad x \in \spatialdomain\\
	u(0; \params) &= 0\\
	k(1; \params) u_x(1; \params) &= 0
 \end{split}
\label{eq:laplace_pde}
 \end{align}
where $\params = (\theta_1,\ldots,\theta_6)^T \in \paramdomain$, $k:\spatialdomain \times \paramdomain \to \R$ is the log-diffusivity, and $u : \spatialdomain \times 	\paramdomain \to \R$ is the temperature function.  The log-diffusivity $k(x;\ \params)$ is a smoothed piecewise constant.  In particular, let
\[
	I(x, \alpha) = \left( 1 + \exp \left( - \frac{x - \alpha}{0.005} \right) \right)^{-1}
\]
and $\alpha_i = (i-1)/6$ for $i=1,\ldots,7$.  Define
\begin{equation}
	\hat{k}_i(x; \params) = (1 - I(x,\alpha_i)) \hat{k}_{i-1}(x; \params) + I(x,\alpha_i) \theta_i
\label{eq:log_diffusivity}
\end{equation}
for $i=2,\ldots,6$ and $\hat{k}_1(x; \params) = \theta_1$.  Now set the log-diffusivity $k = \hat{k}_6$.  We discretize \eqref{eq:laplace_pde} in the spatial domain $\spatialdomain$ using linear finite elements with mesh width $\fid > 0$ (i.e. $\fid^{-1}$ many elements) and the corresponding sparse (tri-diagonal) linear system is solved using a Cholesky factorization.  The parameter-to-observable map $\map_{\fid} : \paramdomain \to \R^{120}$ is the discretized solution $u_{\fid}$ with mesh width $\fid$ evaluated at 120 equally-spaced points on $\spatialdomain$
 \[
	 \left( \map_{\fid}(\params) \right)_i = u_{\fid}(i/120),\quad i = 1,\ldots,120\,.
 \]
For the high-fidelity parameter-to-observable map we set $H^{-1} = 256$ elements, (i.e. $\map = \map_{H}$) and for the surrogate maps $\map_{\fid}$ we consider $\fid^{-1} = 8,12,16,\ldots, 64$ (multiples of 4 for the number of elements).

\subsubsection{Setup of the inverse problem}

A single observation $\obs = \map(\params_{\mathrm{truth}}) + \noisevar$ is generated where $\params_{\mathrm{truth}} = (1,\ldots,1)^T \in \R^6$ and $\noisevar \sim N(\zeros,\ 10^{-5} \eye_{120 \times 120 })$.  The added noise corresponds to approximately 1\% of the true solution $u$ at the right endpoint $x = 1$.  The prior distribution is taken to be a Gaussian with mean $\prmean = (1,\ldots,1)^T \in \R^6$ and covariance $\prcov = 10^{-1}  \eye_{6 \times 6} \in \R^{6\times 6}$.  For the test function let $\bfv_1 \in \R^6$ be the largest eigenvector of $\lapcov$ and set
 \begin{equation}
        f(\params) = 2\cdot \indicator{  (\params - \lapmean) \cdot \bfv_1 \ge 0} - 1
        \label{eq:testfun}
 \end{equation}
so that $f(\params) \in \{ \pm 1\}$ for all values of $\params$.  The idea behind this choice of test function is that the asymptotic variance of the MFIS estimator \eqref{eq:mfis} is largest whenever $f$ is not tightly concentrated around its expectation under $q_{\fid^*}$.  Here the expectation of $f$ under $q_{\fid^*}$ should be close to zero even though $f$ itself is never close to zero.

\subsubsection{Results}

A Laplace approximation to each surrogate posterior $p_{\fid}$ is fit using the Newton-CG method where the gradient and Hessian matrix are computed using a second-order finite difference scheme with a total of $M = 1150$ model evaluations at each fidelity.  The cost function has the form $\cost(\fid) = c_0 + c_1/\fid$, where $c_0$ is included to model any baseline cost independent of the fidelity, and accuracy has the form $\acc(\fid) = a_1 \fid^2$ since we use linear finite elements.  The cost is linear in $\fid^{-1}$ since the system of linear equations is tri-diagonal. We estimate the $\chi^2$ divergence with Monte Carlo estimator
\begin{equation}
    \hat{\chi}^2_{\fid,\samples} = \samples \frac{ \sum_{i=1}^{\samples} \left( \tilde{p}(\params^{(i)}) /  q_{\fid}(\params^{(i)}) \right)^2  }{ \left( \sum_{i=1}^{\samples} \tilde{p}(\params^{(i)}) /  q_{\fid}(\params^{(i)}) \right)^2 } \longrightarrow \chisq{p}{q_{\fid}} + 1, \quad \text{almost surely as } \samples \to \infty
\label{eq:chi2estimate}
\end{equation}
and $\{ \params^{(i)} \}_{i=1}^{\samples}$ are i.i.d. samples drawn from $q_{\fid}$.  Then the curve $\tilde{K}_0 e^{K_1 \fid^2}$ is fit using the estimated $\chi^2$ values $\hat{\chi}^2_{\fid,10^3}$ for each fidelity $\fid^{-1} = 8,12,16,\ldots,64$ averaged over $N_1 = 500$ independent trials.  The measured $\chi^2$ values are
\[
\hat{\chi}^2_{\mathrm{meas},\fid} = \frac{1}{N_1} \sum_{i=1}^{N_1} \left(\hat{\chi}^2_{\fid,\samples}\right)^{(i)}
\]
with the superscript $(i)$ denoting one of the independent trials.  The fitted curve along with the measured values are shown in Figure~\ref{fig:laplace_chi2_optfid}. The $\chi^2$ divergence is large for low fidelities but quickly levels off and then is limited by the restriction of the biasing density to be the Laplace approximation rather than the surrogate density itself.

Since we only consider finitely many surrogate models $\fid^{-1} = 8,12,16,\ldots,64$, we approximate the solution of the optimization problem \eqref{eq:optimization} with a brute force search to find the best fidelity $\fid^*$ from the list of fidelities that we consider and set $\samples^* = \ceil{\hat{m}^*}$ with $\hat{m}^*$ corresponding to $h^*$.  Figure~\ref{fig:laplace_chi2_optfid} shows the selected fidelity as a function of the tolerance $\epsilon$.  As the tolerance shrinks we require a higher-fidelity model to fit a Laplace approximation.  Using the pair $(\fid^*,\samples^*)$, Figure~\ref{fig:laplace_tradeoff} shows the theoretical optimal trade-off between cost in seconds and the MSE (tolerance) of the estimator $\hat{f}_{\fid^*,\samples^*}$.  We estimated the true value $\E{f}{p}$ using $\hat{f}_{H,10^5}$ and averaged the results over $N_2 = 500$ independent trials (again denoted by the superscript $(i)$)
\begin{equation}
	\bar{f} = \frac{1}{N_2} \sum_{i=1}^{N_2} \hat{f}^{(i)}_{H,10^5}\,.
\label{eq:truth_estimate}
\end{equation}
Next we estimated the MSE of $\hat{f}_{\fid^*,\samples^*}$ using $N_3 = 1000$ trials
\begin{equation}
		\widehat{\mathrm{MSE}}_{\epsilon} = \frac{1}{N_3} \sum_{i=1}^{N_3} \left(  \hat{f}_{\fid^*, \samples^*}^{(i)} - \bar{f}  \right)^2\,.
\label{eq:mse_estimate}
\end{equation}
Here the subscript $\epsilon$ denotes the dependence of the pair $(\fid^*,\samples^*)$ on the tolerance $\epsilon$.  Figure~\ref{fig:laplace_tradeoff} shows the averaged MSE over $N_3 = 1000$ trials for different tolerances $\epsilon$ as well as the MSE for the estimators $\hat{f}_{H,\samples_H}$ and $\hat{f}_{\fid_0,\samples_{\fid_0}}$ where the number of samples is
\[
    \samples_{\fid} = \ceil{\frac{\tilde{K}_0}{\epsilon} \exp\left( K_1 \fid^{2} \right)}
\]
and $\fid_0 = 8$ is the lowest fidelity we consider (for the surrogate only estimator we average only $N_3 = 500$ trials).  For moderate error tolerances we can achieve an order of magnitude speedup since most of the cost comes from fitting a Laplace approximation; using a very accurate model is not necessary, but using a very cheap surrogate model is insufficient.  As the tolerance shrinks, most of the computation shifts to the online sampling phase which begins to dominate and little speedup is obtained. This matches the theoretical speedup derived in Section~\ref{section:section3p4}.

\begin{figure}
\begin{minipage}{0.49\textwidth}
{{\Large\resizebox{\columnwidth}{!}{\input{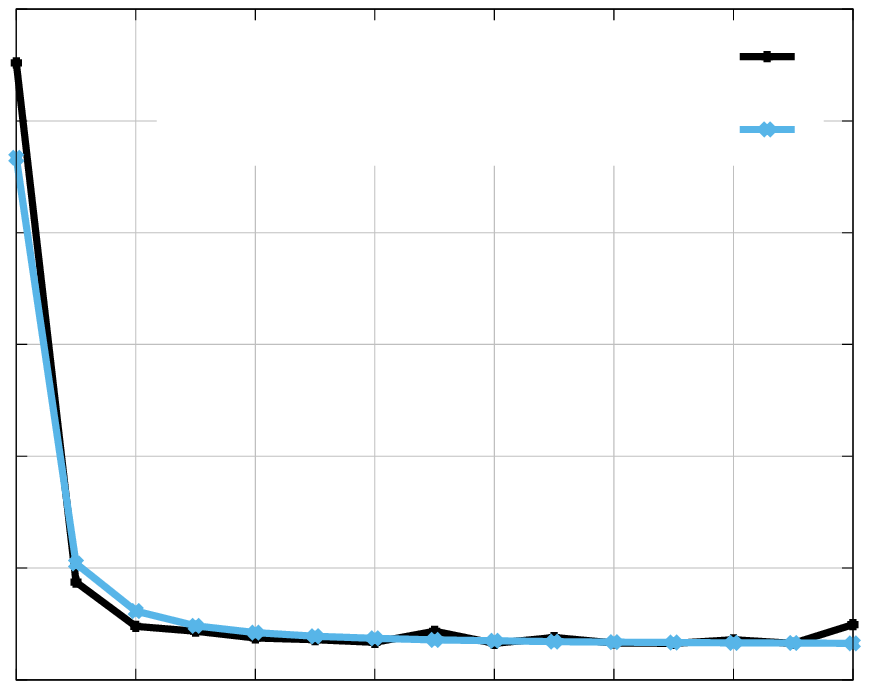}}}}
\end{minipage}
\hfill
\begin{minipage}{0.49\textwidth}
{{\Large\resizebox{\columnwidth}{!}{\input{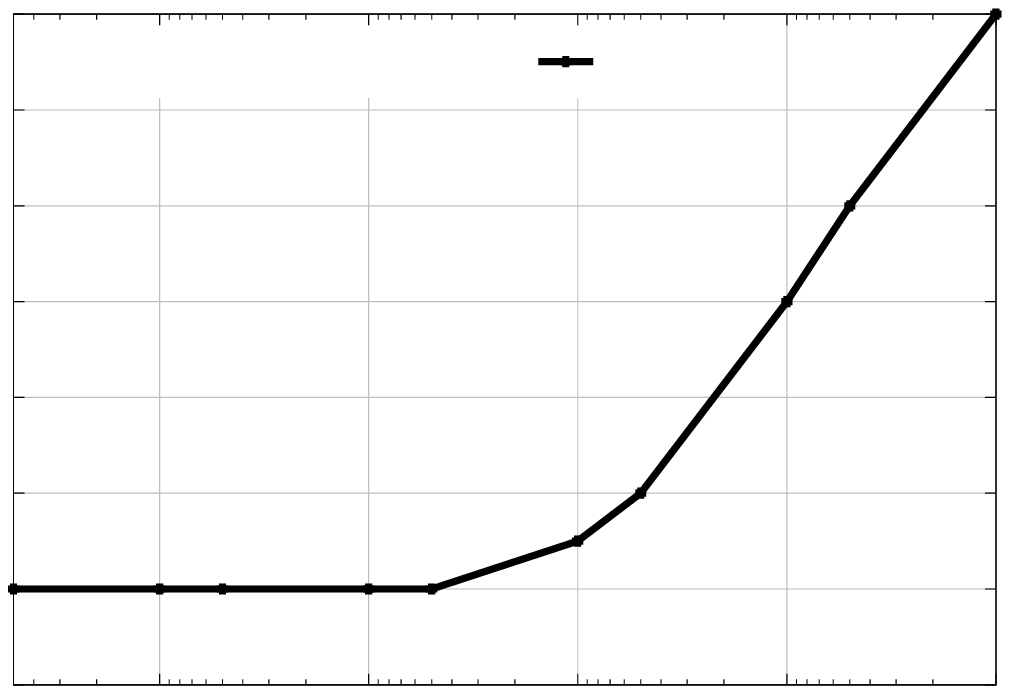}}}}
\end{minipage}
\caption{({Left}) The measured $\chi^2$ divergences, $\hat{\chi}^2_{\mathrm{meas},\fid}$, between the high-fidelity posterior $p$ and the Laplace approximation $q_{\fid}$ to each surrogate posterior $p_{\fid}$.  ({Right}) The selected fidelity for the number of elements $(\fid^*)^{-1}$ from the optimization \eqref{eq:optimization} as the tolerance $\epsilon$ on the MSE changes.}
\label{fig:laplace_chi2_optfid}
\end{figure}

\begin{figure}
\begin{minipage}{0.49\textwidth}
{{\Large\resizebox{\columnwidth}{!}{\input{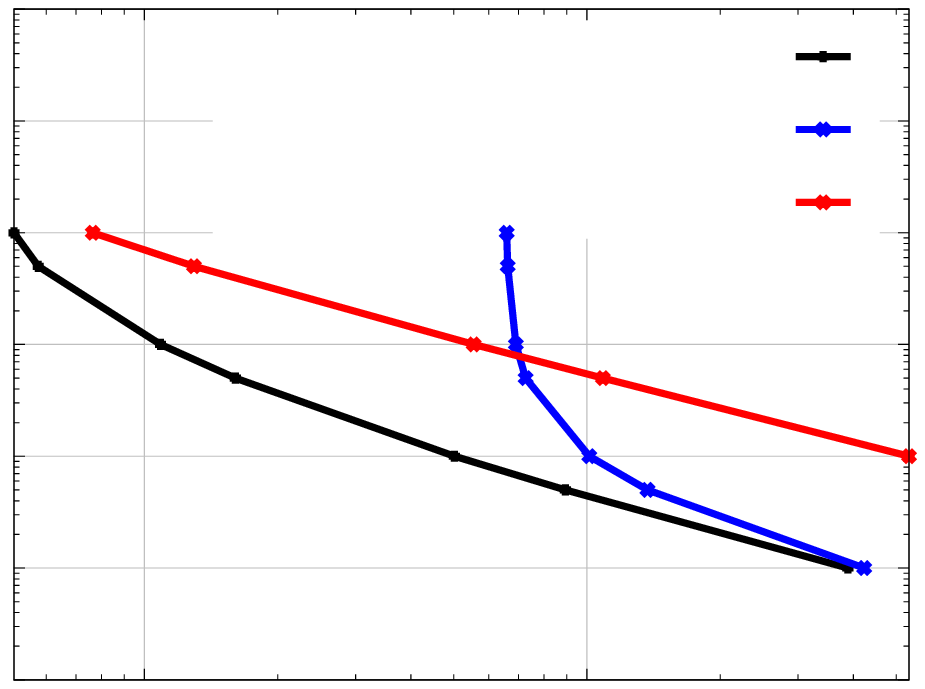}}}}
\end{minipage}
\hfill
\begin{minipage}{0.49\textwidth}
{{\Large\resizebox{\columnwidth}{!}{\input{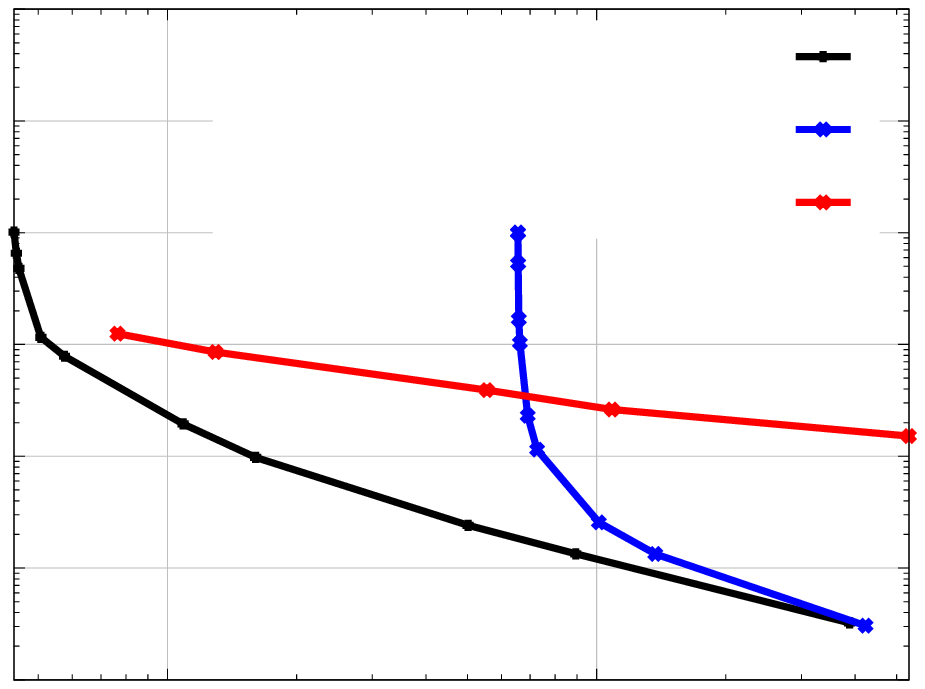}}}}
\end{minipage}
\caption{({Left}) The theoretical error tolerance $\epsilon$ against the total cost (seconds of CPU time) to fit the Laplace approximation $q_{\fid^*}$ of $p_{\fid^*}$ and draw $\samples^*$ samples.  ({Right}) The actual measured $\widehat{\mathrm{MSE}}_{\epsilon}$ against the total cost.  Note that the results shown in the left plot is an upper bound for the results shown in right plot by the bound \eqref{eq:mse_bound}.}
\label{fig:laplace_tradeoff}
\end{figure}

\subsection{Euler Bernoulli Problem}

In the second example we infer the effective stiffness of an Euler Bernoulli beam. The forward-model code is available on GitHub\footnote{\texttt{https://github.cim/g2s3-2018/labs}} and was developed by Matthew Parno as a part of the 2018 Gene Golub SIAM Summer School on ``Inverse Problems: Systematic Integration of Data with Models under Uncertainty”.  The rest of the setup of this problem is taken from Section 4.2 of \cite{PM}.

\subsubsection{Problem Setup}

Let $\spatialdomain = (0,1) \subset \R$ and $\paramdomain = \R^6$ and consider the PDE
\begin{equation}
        \frac{\partial^2}{\partial x^2} \left( E(x; \params) \frac{\partial^2}{\partial x^2} u(x; \params) \right) = g(x), \quad x \in \spatialdomain
\label{eq:eb_pde}
\end{equation}
with boundary conditions
 \[
     u(0;\params) = 0,\quad \frac{\partial u}{\partial x}(0;\params) = 0, \quad \frac{\partial^2 u}{\partial x^2}(1;\params) = 0, \quad \frac{\partial^3 u}{\partial x^3}(1;\params) = 0
\]
where $u:\spatialdomain \times \paramdomain \to \R$ is the displacement and $E:\spatialdomain \times \paramdomain \to \R$ is the effective stiffness of the beam.  The applied force $g(x)$ is taken to be $g(x) = 1$.  The effective stiffness $E(x;\params)$ is a smooth piecewise constant defined in the same way as the log-diffusivity \eqref{eq:log_diffusivity} but with $\theta_i$ replaced by $|\theta_i|$ for $i=1,\ldots,6$.  We discretize \eqref{eq:eb_pde} in the spatial domain $\spatialdomain$ with a mesh width $\fid > 0$  (i.e. $\fid^{-1} + 1$ grid points) using a second-order finite difference scheme and solve the resulting linear system of equations for the discretized solution $u_{\fid}$ at the grid points.

The parameter-to-observable map $\map_{\fid} : \paramdomain \to \R^{40}$ is the linear interpolant of the $\fid^{-1} + 1$ grid points evaluated at 40 equally spaced points in the spatial domain $(0,1)$
\[
\left( \map_{\fid}(\params) \right)_i = u_{\fid}\left( \frac{i-1}{39} \right),\quad i = 1,\ldots,40
\]
Note that we exclude the left end-point at $x=0$ since it is fixed by the boundary conditions.  We set the high-fidelity map to be $\map = \map_{H}$ with $H^{-1} + 1 = 256$ grid points and for the surrogate maps we again consider $\fid^{-1} + 1 = 8,12,16,\ldots,64$.

\subsubsection{Setup of the inverse problem}

A single observation $\obs = \map(\params_{\mathrm{truth}}) + \noisevar \in \R^{40}$ is generated where $\params_{\mathrm{truth}} = (1,\ldots,1)^T \in \R^6$ and $\noisevar \sim N(\zeros,\ \noise)$ with noise covariance $\noise = 5.623 \times 10^{-4} \eye_{40\times 40}$.  The added noise now corresponds to approximately 5\% of the true solution $u$ at the right endpoint $x = 1$.  The prior is again a Gaussian with mean $\prmean = (1,\ldots,1)^T \in \R^6$ and covariance $\prcov = 1.778 \times 10^{-2}  \eye_{6 \times 6} \in \R^{6\times 6}$.  For the test function we use the same test function \eqref{eq:testfun} from the steady-state heat problem.

\subsubsection{Results}

We again fit a Laplace approximation to each surrogate posterior $p_{\fid}$ using Newton-CG with the gradient and Hessian computed by second-order finite difference approximations.  The total number of model evaluations is $M = 1800$ at each fidelity.  The cost function has the form $\cost(\fid) = c_0 + c_1 /\fid$ (linear in $\fid^{-1}$ because the system of linear equations from the discretization is sparse) and the surrogate accuracy has the form $\acc(\fid) = a_1 h^2$ from the second-order finite difference spatial discretization.

We use the $\chi^2$ divergence estimator $\hat{\chi}^2_{\fid,10^5}$ from \eqref{eq:chi2estimate} and average the results over $N_1 = 100$ independent trials to obtain the measured value $\hat{\chi}^2_{\mathrm{meas},\fid}$ as in \eqref{eq:chi2estimate} for each surrogate map $h^{-1} + 1= 8,12,16\ldots,64$.  We then use these measured values to fit the curve $\tilde{K}_0 e^{K_1 h^2}$.  Figure~\ref{fig:eb_chi2_optfid} shows the results.  Observe that the $\chi^2$ divergence quickly levels off again.

The fidelity and sample size $(\fid^*, \samples^*)$ are found using a brute-force search and Figure~\ref{fig:eb_chi2_optfid} shows the selected number of grid points $(\fid^*)^{-1} + 1$ as a function of the MSE tolerance $\epsilon$.  When the tolerance is small the selected fidelity is the highest fidelity since we do not consider any surrogate models with $h^{-1} + 1$ between 64 and 256.  Figure~\ref{fig:eb_tradeoff} shows the theoretical optimal cost and error trade-off for $\hat{f}_{\fid^*,\samples^*}$.  We estimated the true value $\E{f}{p}$ using $\hat{f}_{H,10^5}$ with $N_2 = 100$ independent trials using equation~\eqref{eq:truth_estimate} and the MSE was estimated with $N_3 = 2500$ independent trials using equation~\eqref{eq:mse_estimate}.  Here the lowest-fidelity surrogate model corresponds to $\fid_0 = 16$.  From the plot we can observe an order of magnitude speedup for moderate tolerances where we do not need to use a high-fidelity model to fit the Laplace approximation.  Also note that the theoretical trade-off is an upper bound but the shape matches closely with the measured results.

\begin{figure}
\begin{minipage}{0.49\textwidth}
{{\Large\resizebox{\columnwidth}{!}{\input{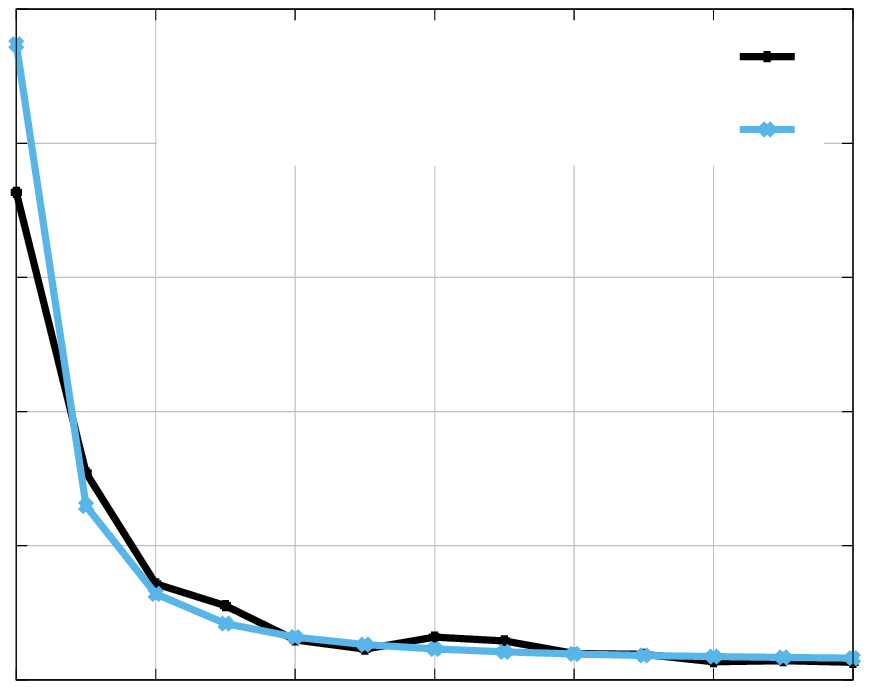}}}}
\end{minipage}
\hfill
\begin{minipage}{0.49\textwidth}
{{\Large\resizebox{\columnwidth}{!}{\input{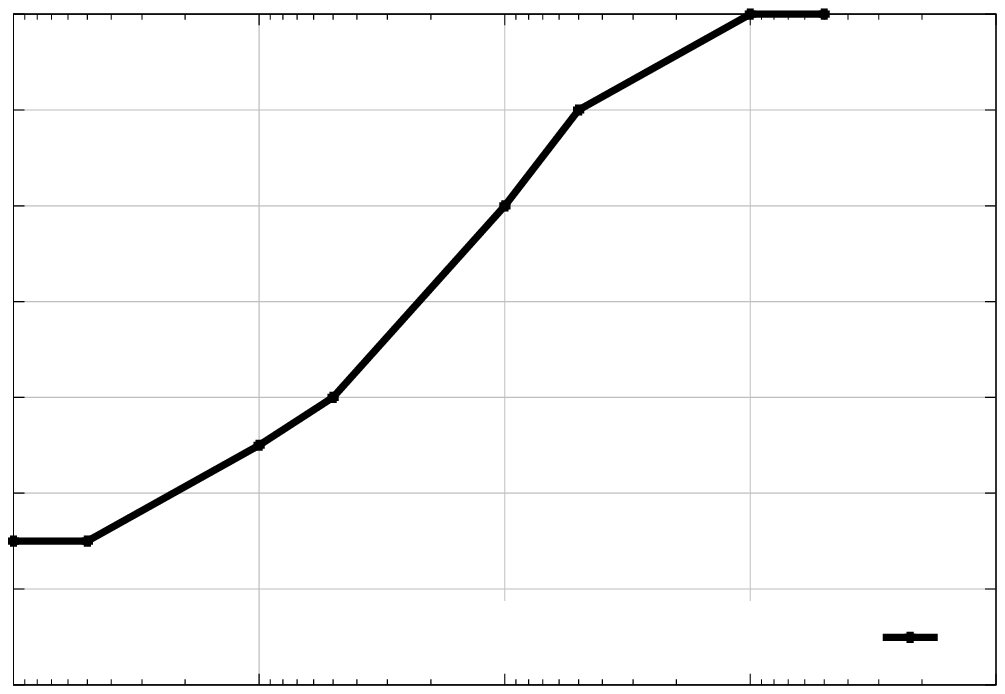}}}}
\end{minipage}
\caption{({Left}) The measured $\chi^2$ divergences, $\hat{\chi}^2_{\mathrm{meas},\fid}$, between the high-fidelity posterior $p$ and the Laplace approximation $q_{\fid}$ to each surrogate posterior $p_{\fid}$.  ({Right}) The selected fidelity for the number of grid points $(\fid^*)^{-1} + 1$ from the optimization \eqref{eq:optimization} as the tolerance $\epsilon$ on the MSE changes.}
\label{fig:eb_chi2_optfid}
\end{figure}

\begin{figure}
\begin{minipage}{0.49\textwidth}
{{\Large\resizebox{\columnwidth}{!}{\input{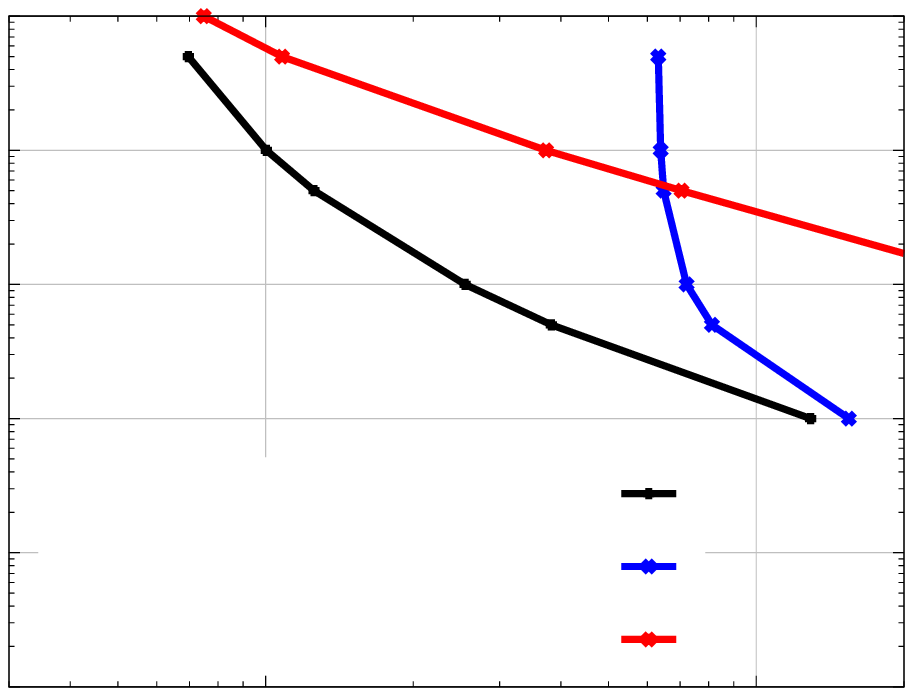}}}}
\end{minipage}
\hfill
\begin{minipage}{0.49\textwidth}
{{\Large\resizebox{\columnwidth}{!}{\input{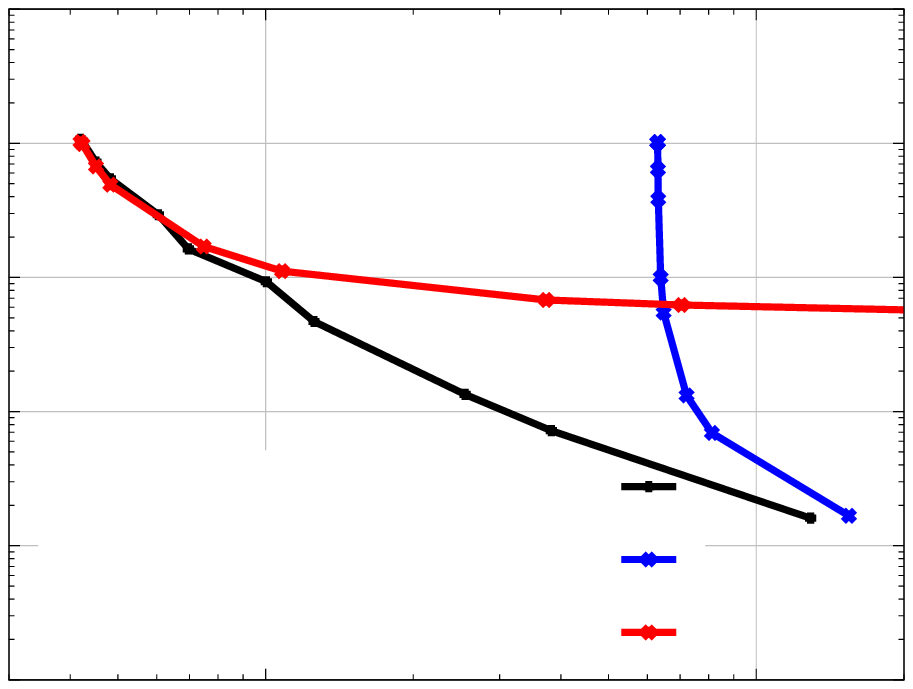}}}}
\end{minipage}
\caption{({Left}) The theoretical error tolerance $\epsilon$ against the total cost (seconds of CPU time) to fit the Laplace approximation $q_{\fid^*}$ of $p_{\fid^*}$ and draw $\samples^*$ samples.  ({Right}) The actual measured $\widehat{\mathrm{MSE}}_{\epsilon}$ against the total cost.  Note that the results in the left plot are upper bounding the results in the right plot by the bound \eqref{eq:mse_bound}.}
\label{fig:eb_tradeoff}
\end{figure}

\section*{Acknowledgements}
The authors acknowledge support of the National Science Foundation under Grant No.~1761068 and Grant No.~1901091. The first author was supported in part by the Research Training Group in Modeling and Simulation funded by the National Science Foundation via grant RTG/DMS – 1646339. The second author also acknowledges support from the AFOSR under Award Number FA9550-21-1-0222 (Dr.~Fariba Fahroo).

\bibliography{references}
\bibliographystyle{abbrv}

\appendix

\section{Proof of Lemma~\ref{lemma:subgaussian}}
\label{appdx:proof}

  \begin{proof}
    Suppose that $\bfx$ is a sub-Gaussian random vector and consider the matrix to be a multiple of the identity, $\bA = \alpha \eye$ with $\alpha > 0$.  We now only need to show that there exists an $\alpha > 0$ such that for all $\bfm \in \mathbb{R}^{\dimx}$
    \[
        \E{ \exp\left( \alpha \|\bfx - \bfm\|^2 \right) }{\pi} = \E{ \exp\left( (\bfx - \bfm)^T\bA (\bfx - \bfm) \right)}{\pi} < \infty\,.
    \]
    Since $\|\bfv + \bfw\|^2 \le 2\|\bfv\|^2 + 2\|\bfw\|^2$ by the triangle inequality and the fact that $(a+b)^2 \le 2a^2 + 2b^2$, we get the upper bound
    \[
        \E{\exp\left( \alpha \|\bfx - \bfm\|^2 \right) }{\pi} \le \E{ \exp \left( 2\alpha \|\bfm\|^2 + 2\alpha \|\bfx\|^2 \right) }{\pi} =  \exp\left(2\alpha \|\bfm\|^2 \right) \E { \exp\left( 2\alpha \|\bfx\|^2 \right)}{\pi}\,.
    \]
    Therefore, we only need to find $\alpha > 0$ such that
    \[
        \E{ \exp\left( 2\alpha \|\bfx\|^2 \right) }{\pi}< \infty\,.
    \]
    We now use the assumption that $\bfx$ is sub-Gaussian by taking the marginals
    \begin{align*}
        \E{ \exp\left( 2\alpha \|\bfx\|^2 \right) }{\pi}
        &= \E{ \exp\left( 2\alpha \sum_{i=1}^{\dimx} x_i^2 \right) }{\pi}\\
        &= \E{ \exp\left( 2\alpha \sum_{i=1}^{\dimx} |\cbv_i^T \bfx|^2 \right) }{\pi}\\
        &= \E{ \prod_{i=1}^{\dimx} \exp\left( 2\alpha |\cbv_i^T \bfx|^2 \right) }{\pi}\,,\\
    \end{align*}
    where $\cbv_i$ is the $i$-th canonical unit vector.
    We proceed by induction on the dimension $\dimx$ and repeatedly use the Cauchy-Schwarz inequality to show that this expectation is finite.  When $\dimx=1$, take $\alpha_1$ such that $\frac{1}{\sqrt{2\alpha_1}} > \|\bfx\|_{\psi_2}$ so that
    \[
        \E{ \exp\left( 2\alpha_1 |\cbv_1^T \bfx|^2 \right) }{\pi}
        = \E{  \exp\left( \frac{|\cbv_1^T \bfx|^2}{(1/\sqrt{2\alpha_1})^2} \right) }{\pi}
        \le 2\,.
    \]
    Note that since $\bfx$ is sub-Gaussian $\|\bfx\|_{\psi_2} < \infty$ we can indeed find an $\alpha_1 > 0$ to satisfy the inequality.  Now suppose that for dimension $\dimx-1$ there exists an $\alpha_{\dimx-1}$ such that
    \[
        \E{ \prod_{i=1}^{\dimx-1} \exp\left( 2\alpha_{\dimx-1} |\cbv_i^T \bfx|^2 \right)}{\pi}
        = C_{\dimx-1} < \infty\,.
    \]
    By using the Cauchy-Schwarz inequality, we get that
        \begin{align*}
        \E{ \prod_{i=1}^{\dimx} \exp\left( 2\alpha_{\dimx} |\cbv_i^T \bfx|^2 \right) }{\pi}
        & \le  \E{ \prod_{i=1}^{\dimx-1} \exp\left( 4\alpha_{\dimx} |\cbv_i^T \bfx|^2 \right) }{\pi}^{1/2}
          \E{  \exp\left( 4\alpha_{\dimx} |\cbv_{\dimx}^T \bfx|^2 \right) }{\pi}^{1/2}\,.
    \end{align*}
    Taking $\alpha_{\dimx} \le \alpha_{\dimx-1}/2$ gives
    \[
        \E{ \prod_{i=1}^{\dimx-1} \exp\left( 4\alpha_{\dimx} |\cbv_i^T \bfx|^2 \right) }{\pi}^{1/2}
        \le \E{ \prod_{i=1}^{\dimx-1} \exp\left( 2\alpha_{\dimx-1} |\cbv_i^T \bfx|^2 \right) }{\pi}^{1/2}
        = C_{\dimx-1}^{1/2}\,.
    \]
    Taking $\alpha_{\dimx}$ such that $\frac{1}{\sqrt{4\alpha_{\dimx}}} > \|\bfx\|_{\psi_2}$ gives
    \[
        \E{  \exp\left( 4\alpha_{\dimx} |\cbv_{\dimx}^T \bfx|^2 \right) }{\pi}^{1/2}
        \le \E{  \exp\left( \frac{ |\cbv_{\dimx}^T \bfx|^2}{(1/\sqrt{4\alpha_{\dimx}})^2} \right) }{\pi}^{1/2}
        \le \sqrt{2}\,.
    \]
    Thus, take $\alpha_{\dimx} < \frac{1}{4}\min \{2\alpha_{\dimx-1},\ \|\bfx\|_{\psi_2}^{-2} \}$, so that
    \[
        \E{ \prod_{i=1}^{\dimx} \exp\left( 2\alpha_{\dimx} |\cbv_i^T \bfx|^2 \right) }{\pi}
        \le \sqrt{2C_{\dimx-1}} < \infty\,.
    \]
    Since the dimension is finite, we know that we will always be able to take $\alpha_{\dimx} > 0$.  Setting $\alpha = \alpha_{\dimx}$, shows the first direction of the lemma.\\

    For the converse suppose that there exists a symmetric positive-definite matrix $\bA \succ 0$ so that for all vectors $\bfm$
    \[
        \E{ \exp \left( (\bfx - \bfm)^T \bA (\bfx - \bfm) \right) }{\pi} < \infty\,.
    \]
    In particular, for $\bfm = 0$
    \[
        \E{ \exp \left( \bfx^T \bA \bfx \right)}{\pi} = C < \infty\,.
    \]
    For any $\bfv \in S^{\dimx-1}$, we have that
    \begin{align*}
        \E{ \exp\left( \frac{|\bfv^T\bfx|^2}{t^2} \right) }{\pi}
        \le \E {\exp\left( \frac{\|\bfx \|^2}{t^2} \right) }{\pi}\,,
    \end{align*}
    since $|\bfv^T\bfx| \le \|\bfv \|  \|\bfx \|$.  Also, since the minimum eigenvalue satisfies $\lambda_{\min}^{\bA} \le \frac{\bfx^T \bA \bfx}{\|\bfx\|^2}$ for all $\bfx \neq 0$, we get
    \[
        \E{ \exp\left( \frac{\|\bfx \|^2}{t^2} \right) }{\pi}
        \le \E{ \exp\left( \frac{\bfx^T \bA \bfx}{\lambda_{\min}^{\bA} t^2} \right)}{\pi}
        = \E{ \left\{ \exp\left( \bfx^T \bA\bfx \right) \right\}^{1/\lambda_{\min}^{\bA} t^2} }{ \pi }\,.
    \]
    If $\lambda_{\min}^{\bA} t^2 > 1$, then the function
    \[
        g(x) = x^{1/(\lambda_{\min}^{\bA} t^2)}
    \]
    is concave and increasing in $x$.  By Jensen's inequality, we obtain
    \[
        \E{ \left\{ \exp\left( \bfx^T \bA \bfx \right) \right\}^{1/\lambda_{\min}^{\bA} t^2} }{\pi}
        \le \E{ \exp\left( \bfx^T \bA \bfx \right)}{\pi}^{1/\lambda_{\min}^{\bA } t^2}
        = C^{1/\lambda_{\min}^{\bA} t^2}\,.
    \]
    Setting $C^{1/\lambda_{\min}^{\bA} t^2} \le 2$ and solving for $t$ gives
    \[
        t \ge \sqrt{\frac{\log C}{\lambda_{\min}^{\bA} \log 2}}\,.
    \]
    Since this inequality holds for every $\bfv \in S^{\dimx-1}$ we know that $\|\bfx\|_{\psi_2} < \infty$ and hence $\bfx$ is sub-Gaussian.
\end{proof}

\end{document}